\newtheorem{theorem}{Theorem}[section]
\newtheorem{corollary}[theorem]{Corollary}
\newtheorem{main}{Main Theorem}
\newtheorem*{main*}{Main Theorem}
\newtheorem{lemma}[theorem]{Lemma}
\newtheorem{proposition}[theorem]{Proposition}
\theoremstyle{definition}
\newtheorem{definition}[theorem]{Definition}
\newtheorem{remark}[theorem]{Remark}
\newtheorem{example}[theorem]{Example}
\title[Running heading with forty characters or less]
      {Modified Schmidt games and non-dense forward orbits of  partially hyperbolic systems}
\author[first-name1 last-name1 and first-name2 last-name2]{Weisheng Wu}
\subjclass{}
 \keywords{Modified Schmidt games; Hausdorff dimension; Partially hyperbolic diffeomorphisms; Non-dense orbits}
\address{School of Mathematical Sciences, Peking University, Beijing, 100871, China}
 \email{wuweisheng@math.pku.edu.cn}
\begin{document}
\maketitle
\markboth{Modified Schmidt games and non-dense forward orbits of  partially hyperbolic systems}
{Modified Schmidt games and non-dense forward orbits of  partially hyperbolic systems}
\renewcommand{\sectionmark}[1]{}

\begin{abstract}
Let $f: M \to M$ be a $C^{1+\theta}$-partially hyperbolic diffeomorphism. We introduce a type of modified Schmidt games which is induced by $f$ and played on any unstable manifold. Utilizing it we generalize some results of \cite{Wu} as follows. Consider a set of points with non-dense forward orbit:
$$E(f, y) := \{ z\in M: y\notin \overline{\{f^k(z), k \in \mathbb{N}\}}\}$$
for some $y \in M$ and
$$E_{x}(f, y) := E(f, y) \cap W^u(x)$$
for any $x\in M$. We show that $E_x(f,y)$ is a winning set for such modified Schmidt games played on $W^u(x)$, which implies that $E_x(f,y)$ has Hausdorff dimension equal to $\dim W^u(x)$. Then for any nonempty open set $V \subset M$ we show that $E(f, y) \cap V$ has full Hausdorff dimension equal to $\dim M$, by using a technique of constructing measures supported on $E(f, y)$ with lower pointwise dimension approximating $\dim M$.
\end{abstract}

\section{Introduction}
\subsection{Non-dense orbits and Schmidt games}
It is well known that an Anosov system has many periodic points and many other proper compact invariant sets (cf. \cite{Pr}, \cite{Fr}, \cite{Ma}), which demonstrate high complexity of such systems. For partially hyperbolic systems, we even don't know the existence of periodic points. Nevertheless, we can study non-dense orbits of a partially hyperbolic system in order to study its proper compact invariant sets. In this paper, we show that for a general partially hyperbolic system the set of points with non-dense forward orbit has full Hausdorff dimension equal to the dimension of the underlying manifold. This is an interesting result, because it is obvious to see that the set of points with non-dense forward orbit is negligible with respect to any smooth ergodic measure (if we suppose such a measure exists). But this exceptional set is large in terms of Hausdorff dimension.

The same result was proved in \cite{U} for a $C^2$-expanding endomorphism or a transitive $C^2$-Anosov diffeomorphism by using Markov partitions. Another remarkable result is that for any toral endomorphism on $\mathbb{T}^n$ the set of points with non-dense forward orbit is a winning set for Schmidt games (cf. \cite{D}, \cite{BFK}). The winning property is a strengthening of having full Hausdorff dimension, for example it is stable under countable intersections. It was also proved in \cite{tseng} that for a $C^2$-expanding self-maps on the circle the set of points with non-dense forward orbit is a winning set for Schmidt games.

Schmidt games were first introduced by W.M.Schmidt in \cite{S} in 1966. A winning set for such games is large in the following sense: it is dense in the metric space, and its intersection with any nonempty open subset has full Hausdorff dimension. Moreover, the winning property is stable with respect to countable intersections. See Section $2$ or \cite{S} for more details. Schmidt also proved in \cite{S} that the set of badly approximable numbers is a winning set for Schmidt games and hence has full Hausdorff dimension $1$. Schmidt games find their novelty and importance in homogeneous dynamics due to the well known connection between Diophantine approximation and bounded orbits of nonquasiunipotent flows on homogeneous spaces. For topics along this line, see \cite{S2}, \cite{D1}, \cite{D2}, \cite{AL}, \cite{Dol}, \cite{KM}, \cite{KW1}, \cite{KW2}, \cite{DS}, \cite{AN} and many others. Recently Kleinbock and Weiss developed a type of modified Schmidt games in \cite{KW1}. They showed in \cite {KW1} that the set of weighted badly approximable vectors is winning for such modified Schmidt games and hence has full Hausdorff dimension. They also proved in \cite {KW2} a conjecture of Margulis which strengthens the results in \cite{KM}, by utilizing such modified Schmidt games.

\subsection{Statement of results}
In this paper we always let $M$ be an $n$-dimensional smooth, connected and compact Riemannian manifold without boundary and let $f: M \rightarrow M$ be a $C^{1+\theta}$-diffeomorphism. $f$ is \emph{partially hyperbolic} (cf. \cite{RRU}) if there exists a nontrivial continuous $Tf$-invariant splitting of the tangent bundle $TM= E^s \oplus E^c \oplus E^u$ into so called stable, center, and unstable distributions, such that all unit vectors $v^{\sigma} \in E_x^\sigma$ ($\sigma= c,s,u$) with $x\in M$ satisfy
\begin{equation*}
\|T_xfv^s\| < \|T_xfv^c\| < \|T_xfv^u\|,
\end{equation*}
and
\begin{equation*}
\|T_xf|_{E^s_x}\| <1, \ \ \ \text{\ and\ \ \ \ } \|T_xf^{-1}|_{E^u_x}\| <1,
\end{equation*}
for some suitable Riemannian metric on $M$. The distributions $E^s$, $E^c$, $E^u$ are H\"{o}lder continuous over $M$ but in general not Lipschitz continuous. The stable distribution $E^s$ and unstable distribution $E^u$ are integrable: there exist so called stable and unstable foliations $W^s$ and $W^u$ respectively such that $TW^s=E^s$ and $TW^u=E^u$. It is well known that the foliations $W^u$ and $W^s$ are transversally absolutely continuous with bounded Jacobians (cf. \cite{BP}, \cite{PS}, \cite{BW}). Fix $y\in M$ and define
\begin{equation*}
E(f, y):= \{ z\in M: y\notin \overline{\{f^k(z), k \in \mathbb{N}\}}\}
\end{equation*}
and
\begin{equation*}
E_{x}(f, y) := E(f, y) \cap W^u(x)
\end{equation*}
for any $x \in M$, where $x \in M$ is an arbitrary point and $W^u(x)$ is the (global) unstable manifold through $x$. By definition, any point in $E(f, y)$ has a non-dense forward orbit in $M$. In \cite{Wu} we have showed the following result:
\begin{theorem}(cf. \cite{Wu})
Let $f: M \to M$ be a $C^{1+\theta}$-partially hyperbolic diffeomorphism. Assume either
\begin{enumerate}
  \item $f$ has one dimensional unstable distribution $E^u$;
OR
  \item if $\dim E^u \geq 2$, $f$ is conformal on unstable manifolds, i.e., for each $x\in M$, the derivative map $T_xf|_{E_x^u}$ is a scalar multiple of an isometry.
\end{enumerate}
Then
\begin{enumerate}
  \item $E_{x}(f, y)$ is a winning set for Schmidt games played on $W^u(x)$;
  \item for any nonempty open subset $V$ of $M$, $\dim_H(E(f, y)\cap V)=n$, where $n=\dim M$.
\end{enumerate}
\end{theorem}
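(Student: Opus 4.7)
For part (1), the plan is to design an explicit winning strategy for Alice in the Schmidt $(\alpha,\beta)$-game played on $W^u(x)$ with its induced Riemannian metric. After passing to an adapted metric so that $\|T_z f^{-1}|_{E^u_z}\|\le \lambda^{-1}$ for some uniform $\lambda>1$, either hypothesis~(1) or~(2) guarantees that $f^n|_{W^u}$ is essentially conformal: it distorts distances by a bounded factor, so a ball of radius $r$ is mapped to a set comparable to a ball of radius $\lambda^n r$. Fix a small avoidance radius $\delta>0$ around $y$; given Bob's $k$-th ball $B_k\subset W^u(x)$ of radius $\rho_k=\rho_0(\alpha\beta)^k$, let $n_k$ be the integer for which $f^{n_k}(B_k)$ has diameter approximately some fixed constant $R_0$. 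Then $f^{n_k}(B_k)\cap B(y,\delta)$ has diameter at most $2\delta$, so its pullback into $B_k$ has diameter of order $\delta\rho_k/R_0$; once $R_0$ is large enough that this is smaller than $(1-\alpha)\rho_k$, Alice can place her move $A_k\subset B_k$ of radius $\alpha\rho_k$ disjoint from the pullback. The limit point $z_\infty\in\bigcap_k A_k$ then satisfies $f^{n_k}(z_\infty)\notin B(y,\delta)$ for every $k$, and since $n_{k+1}-n_k$ is uniformly bounded while $f$ has bounded local distortion on the small sets $A_k$, this upgrades to $f^n(z_\infty)\notin B(y,\delta/2)$ for all sufficiently large $n$, so that $z_\infty\in E_x(f,y)$.

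For part (2), combining part (1) with Schmidt's dimension theorem gives $\dim_H E_v(f,y)=\dim E^u$ on every unstable leaf $W^u(v)$. To promote this to full dimension inside $V$, the plan is, for each $\epsilon>0$, to construct a probability measure $\mu_\epsilon$ supported in $E(f,y)\cap V$ whose lower pointwise dimension is at least $n-\epsilon$ almost everywhere; by the mass-distribution principle this forces $\dim_H(E(f,y)\cap V)\ge n-\epsilon$, and sending $\epsilon\to 0$ closes the argument. Inside $V$ pick an unstable foliation chart of the form $W^u_{loc}\times D$, where $D$ is a smooth transversal of dimension $n-\dim E^u$. On each local leaf through $v\in D$, Alice's strategy from part (1) yields a Cantor-type subset of $E_v(f,y)\cap W^u_{loc}(v)$ carrying a Frostman measure $\nu^{(v)}$ of exponent $\dim E^u-\epsilon$, and since the strategy depends continuously on the base point these measures can be arranged to vary measurably in $v$. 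Setting $\mu_\epsilon:=\int_D \nu^{(v)}\,d\mathrm{Leb}_D(v)$ gives a measure supported in $\bigcup_{v\in D}E_v(f,y)\subset E(f,y)$; a Fubini-type estimate on the mass of small balls, together with the absolute continuity of $W^u$ with bounded Jacobians, yields the desired lower pointwise dimension $(\dim E^u-\epsilon)+(n-\dim E^u)=n-\epsilon$.

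The main obstacle lives in part (1): in the non-conformal, higher-dimensional unstable case, the pullback of a round ball under $f^{n_k}$ degenerates into a long thin tube that cannot be avoided by a single ball move of Alice without losing an essential fraction of her radius. This is precisely why hypotheses (1) or (2) are imposed, and it forces the development of the more flexible \emph{modified} Schmidt games of the present paper when one wants to drop these conformality assumptions. A secondary difficulty in part (2) is to arrange the Frostman measures $\nu^{(v)}$ so that their dependence on $v\in D$ is quantitatively compatible with the Fubini estimate; this requires running Alice's strategy uniformly across a continuous family of leaves and exploiting the H\"older regularity of the unstable distribution.
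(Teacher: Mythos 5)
The theorem you are proving is cited from \cite{Wu} rather than reproved in this paper, but the present paper follows the same proof scheme in Sections 3--4 (adapted to modified Schmidt games), so one can compare your sketch to that scheme.

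Your outline of part (2) is essentially the paper's approach: integrate leafwise Frostman measures over a smooth transversal, use the measurable dependence of the leafwise measures and the absolute continuity of $W^u$, and apply the mass-distribution principle. The subtlety you flag --- arranging the family $\nu^{(v)}$ to depend measurably on $v$ --- is exactly what Propositions \ref{tilingchoice}--\ref{measurable} in this paper address, so you have correctly identified where the real work lies.

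Part (1) contains a genuine gap. You propose that at turn $k$ Alice avoid the single pullback $f^{-n_k}\bigl(B(y,\delta)\bigr)\cap B_k$, where $n_k$ is chosen so $f^{n_k}(B_k)$ has diameter $\approx R_0$, and then argue that, because $n_{k+1}-n_k$ is uniformly bounded and $f$ has bounded distortion, one can ``upgrade'' $f^{n_k}(z_\infty)\notin B(y,\delta)$ for all $k$ to $f^n(z_\infty)\notin B(y,\delta/2)$ for all large $n$. That upgrade is false: knowing that an orbit avoids a fixed ball along a subsequence of times, even with bounded gaps and bounded distortion of the map, gives no control on the orbit at intermediate times. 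A point outside $B(y,\delta)$ can be inside $B(y,\delta/2)$ one iterate later; bounded distortion of $f$ on the small sets $A_k$ constrains the geometry of $f^{n_k}(A_k)$ but says nothing about where $y$ sits relative to $f^{n_k+j}(A_k)$ for $0<j<n_{k+1}-n_k$. What is actually required --- and what both \cite{Wu} and Section 3 of this paper do --- is for Alice to avoid the preimages $f^{-k}(\Pi(c))\cap W^u(x)$ for \emph{every} $k\ge 0$, not merely a subsequence. Since multiple such preimages can meet Bob's current ball at once, Alice cannot kill them in one move; she needs a counting argument of the type in Lemma \ref{avoid}: in each turn she can avoid a fixed fraction $\eta$ of a finite collection of small sets, and over $r$ consecutive turns with $(1-\eta)^r\,r(a+b)<1$ she clears an entire batch of $r(a+b)$ preimages. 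Your sketch has no analogue of this round-based accounting, and without it the limit point need not lie in $E_x(f,y)$. The diagnosis you give of the higher-dimensional non-conformal difficulty (round balls pulled back to thin tubes) is correct, but even in the conformal case the one-preimage-per-turn strategy is insufficient.
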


However, for general partially hyperbolic systems without conformality on unstable manifolds, we are not able to prove the winning property of $E_x(f,y)$ due to the difficulty to compare diameters of shrinking balls in Schmidt games and preimages (under $f$) of an open set that we try to avoid. Inspired by \cite{KW1}, in this paper we introduce a type of modified Schmidt games played on $W^u(x)$. The idea is that we replace shrinking balls in original Schmidt games by preimages of atoms in a tiling of $W^u(x)$. In \cite{KW1}, similar modified Schmidt games are constructed on the unstable horospherical subgroup $H$ of a nonquasiunipotent flow in a homogeneous space. The construction there relies on the fact there is a tessellation on $H$ (cf. Proposition 3.3 in \cite{KM}). Without the nice algebraic structure, our constructions will lie on all images of $W^u(x)$, i.e. $W^u(x), W^u(f(x)), W^u(f^2(x)), \cdots$. On each of these unstable manifolds we construct a (local) tiling with atoms of diameters bounded from above and below. We do not require these atoms to have Markov property under $f$. Nevertheless we are still able to have a quantitative estimate for sizes of preimages of these atoms under $f$. See Section 2 for details of the construction of such modified Schimidt games.

Utilizing such modified Schmidt games, we can prove

\begin{main}\label{main}
Let $f: M \to M$ be a $C^{1+\theta}$-partially hyperbolic diffeomorphism. Then $E_{x}(f, y)$ is a winning set for modified Schmidt games played on $W^u(x)$.
\end{main}

As a consequence of Main Theorem \ref{main} and the properties of a winning set for modified Schmidt games, one has:
\begin{corollary}\label{corollary}
$E_{x}(f, y)$ is dense in $W^u(x)$ and for any nonempty open subset $U$ of $W^u(x)$, $\dim_H(E_{x}(f, y)\cap U)=\dim W^u(x)$.
\end{corollary}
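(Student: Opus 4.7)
The corollary follows from the Main Theorem once one knows that a winning set for the modified Schmidt games played on $W^u(x)$ is automatically dense in $W^u(x)$ and meets every nonempty open subset in a set of full Hausdorff dimension equal to $\dim W^u(x)$. My plan is to verify these two abstract properties of winning sets for the modified games (using the specific structure of the tilings constructed in Section 2), and then simply invoke them for $E_x(f, y)$ via the Main Theorem.

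For density, I fix a nonempty open set $U \subset W^u(x)$ and a point $p \in U$ with $B(p, r) \subset U$. Because the atoms of the tilings at deep levels have diameters tending to zero -- a consequence of the construction in Section 2 together with the uniform expansion of $f$ along $E^u$ -- Bob can legally begin the game with an initial move whose atom sits inside $B(p, r)$. Alice then follows her winning strategy from the Main Theorem, and the resulting intersection point lies in $E_x(f, y) \cap U$. Hence $E_x(f, y)$ is dense in $W^u(x)$, and $E_x(f, y) \cap U$ is nonempty.

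For the Hausdorff dimension claim, the same initialization trick localizes the game inside $U$, so it suffices to prove that any winning set for the modified games has Hausdorff dimension $\dim W^u(x)$. I would adapt the Cantor-set construction of Schmidt \cite{S}, in the form developed by Kleinbock and Weiss \cite{KW1}: at each stage, the upper and lower bounds on atom diameters leave Bob enough room to branch into many disjoint sub-atoms of comparable size while still complying with the game rules and with Alice's winning constraints. The resulting Cantor set $\mathcal{C} \subset E_x(f, y) \cap U$ carries a natural Bernoulli-type measure whose lower pointwise dimension can be made arbitrarily close to $\dim W^u(x)$, and the mass distribution principle then yields $\dim_H \bigl( E_x(f, y) \cap U \bigr) \geq \dim W^u(x)$; the reverse inequality is immediate.

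The main technical obstacle sits in this last step: producing a Cantor set of dimension close to $\dim W^u(x)$ without assuming conformality on $E^u$. The required control comes from the quantitative estimates on sizes of preimages of atoms under $f$ promised in Section 2, together with bounded distortion along $E^u$ afforded by the $C^{1+\theta}$-regularity. These are precisely the ingredients built into the modified games in order to bypass the shrinking-ball argument of \cite{Wu}, which broke down in the non-conformal case.
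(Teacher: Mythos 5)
Your proposal is correct and follows essentially the same route as the paper: density via letting Bob initialize the game inside a prescribed open set (property (MSG1), Proposition~\ref{winproperty}(1)) and running Alice's winning strategy, and the full-dimension claim via the McMullen-style tree-like Cantor construction with branching controlled by bounded distortion and property ($\nu$2), then applying the mass distribution principle (Lemma~\ref{treelike} and Proposition~\ref{schmidtgame}). The paper's own proof of the corollary is just a two-line citation of exactly these ingredients together with Theorem~\ref{winning}.
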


Let $Y$ be a countable subset of $M$ and define similarly:
\begin{equation*}
\begin{aligned}
E(f, Y)&:= \{ z\in M: Y \cap \overline{\{f^k(z), k \in \mathbb{N}\}}= \emptyset\}, \\
E_{x}(f, Y) &:= E(f, Y) \cap W^u(x).
\end{aligned}
\end{equation*}
By the fact that the winning property for modified Schmidt games is stable under countable intersections, we can prove:
\begin{corollary}\label{corollary2}
$E_{x}(f, Y)$ is a winning set for modified Schmidt games played on $W^u(x)$. Consequently, it is dense in $W^u(x)$ and for any nonempty open subset $U$ of $W^u(x)$, $\dim_H(E_{x}(f, Y)\cap U)=\dim W^u(x)$.
\end{corollary}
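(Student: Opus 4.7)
The plan is to reduce this corollary directly to the Main Theorem via the countable intersection property of winning sets. The first observation is that the definitions give the set identity
\begin{equation*}
E(f, Y) = \bigcap_{y \in Y} E(f, y),
\end{equation*}
since $Y \cap \overline{\{f^k(z): k \in \mathbb{N}\}} = \emptyset$ is equivalent to $y \notin \overline{\{f^k(z): k \in \mathbb{N}\}}$ for every $y \in Y$. Intersecting with $W^u(x)$ yields $E_x(f, Y) = \bigcap_{y \in Y} E_x(f, y)$, a countable intersection.

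Next, I would invoke the Main Theorem to conclude that each $E_x(f, y)$ is a winning set for the modified Schmidt games on $W^u(x)$, and then appeal to the countable intersection property of the winning class (the analog of Schmidt's classical theorem for the modified games constructed in Section~2). The key point is that this intersection property requires the winning parameters to be \emph{uniform} over $y$: typically one fixes Bob's constant $\beta$ and produces a single $\alpha$ that wins against all of Bob's strategies, and one needs $\alpha$ to be chosen independently of $y$. This should not be an obstacle here because the Main Theorem's winning parameter is extracted from the geometry of the constructed tilings of $W^u(x), W^u(f(x)), \ldots$, the H\"older constants of the splitting, and the hyperbolicity rates of $f$, none of which depend on the target point $y$; the role of $y$ enters only through choosing which atoms Alice must avoid, not through the tuning of the game. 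So the winning parameter furnished by the Main Theorem is in fact uniform in $y$.

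With uniformity in hand, the countable intersection $E_x(f, Y)$ is winning, and the density in $W^u(x)$ together with the full Hausdorff dimension statement $\dim_H(E_x(f, Y) \cap U) = \dim W^u(x)$ for every nonempty open $U \subset W^u(x)$ follow from the general properties of winning sets for the modified Schmidt games established in Section~2, exactly as for Corollary~\ref{corollary}. The main conceptual step is therefore just verifying the uniformity mentioned above; once that is observed, the corollary is a formal consequence of the Main Theorem.
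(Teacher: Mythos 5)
Your proposal is correct and follows essentially the same route as the paper: decompose $E_x(f,Y)=\bigcap_{y\in Y}E_x(f,y)$, invoke Theorem~\ref{winning} to get each factor $a$-winning, apply the countable-intersection property from Proposition~\ref{winproperty}(2), and then read off density and Hausdorff dimension as in Corollary~\ref{corollary}. Your explicit check that the winning parameter $a$ is uniform in $y$ is a worthwhile observation and is indeed automatic here, since the $a$ in \eqref{e:a} depends only on $\tau$ and $\sigma_1$, not on the target point.
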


We also show that $E(f, y) \cap V$ has full Hausdorff dimension equal to $\dim M$ for any nonempty open set $V \subset M$. As in \cite{Wu}, we use a technique of constructing measures supported on $E(f, y)\cap V$ with lower pointwise dimension approximating $\dim M$. To do it, we construct a family of measures supported on $E_x(f,y)$ which has appeared in the proof of a result of McMullen (cf. \cite{Mc}, \cite{U}). More importantly we can choose these measures so that they vary measurably as $x$ varies with respect to the volume measure of a local manifold transverse to unstable manifolds. To accomplish it we must set up our modified Schmidt games on different local unstable manifolds with atoms varying at least measurably. As a result, we can prove:

\begin{main}\label{main2}
Let $f: M \to M$ be a $C^{1+\theta}$-partially hyperbolic diffeomorphism. Then for any nonempty open subset $V$ of $M$,
$$\dim_H(E(f, y)\cap V)=n$$
where $n=\dim M$.
\end{main}

\begin{corollary} \label{countable}
For any nonempty open subset $V$ of $M$, $\dim_H(E(f, Y)\cap V)=n$.
\end{corollary}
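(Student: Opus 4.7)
The plan is to show that $E(f,Y)\cap V$ inherits the full-dimension property from Main Theorem~\ref{main2} via the countable stability of the modified Schmidt games. Since $Y$ is countable,
$$
E(f, Y) \cap V \;=\; \bigcap_{y \in Y} \bigl(E(f,y) \cap V\bigr),
$$
but a direct appeal to Main Theorem~\ref{main2} applied to each individual $y$ is not enough, since Hausdorff dimension can drop drastically under countable intersection. Instead, I would transcribe the proof of Main Theorem~\ref{main2}, with one input upgraded: wherever that argument invokes the winning property of $E_x(f,y)$, I would invoke the winning property of $E_x(f,Y)$ supplied by Corollary~\ref{corollary2}.

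In detail, fix a nonempty open $V \subset M$ and a point $p \in V$. Choose a small local transversal $T$ to the unstable foliation through $p$ so that the product chart $T \times W^u_{\mathrm{loc}}(p)$ is contained in $V$. For each $x \in T$, Corollary~\ref{corollary2} guarantees that $E_x(f,Y)$ is winning for the modified Schmidt games on $W^u(x)$. Following the McMullen-style construction already used in Main Theorem~\ref{main2}, one produces, for any prescribed $\varepsilon > 0$, a probability measure $\mu_x$ supported on $E_x(f,Y) \cap W^u_{\mathrm{loc}}(x)$ whose lower pointwise dimension is at least $\dim W^u(x) - \varepsilon$, with the additional property that the assignment $x \mapsto \mu_x$ is measurable in $x$. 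The measurability is arranged exactly as in Main Theorem~\ref{main2}, by setting up the tilings and the modified games coherently on the whole family $\{W^u(x) : x \in T\}$; the intersection $E_x(f,Y) = \bigcap_{y \in Y} E_x(f,y)$ enters only through the winning player's strategy, which now becomes a diagonal strategy avoiding every target in $Y$ and does not disturb the measurable set-up.

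I would then integrate $\mu_x$ against the volume measure on $T$ to build a global measure $\mu$ on $V$. Transversal absolute continuity of the unstable foliation (with bounded Jacobians) together with the uniform lower pointwise dimension of the fibre measures forces the lower pointwise dimension of $\mu$ to be at least $n - \varepsilon$, exactly as in the proof of Main Theorem~\ref{main2}. Since $\mu\bigl(E(f,Y) \cap V\bigr) = 1$, the mass distribution principle gives $\dim_H\bigl(E(f,Y) \cap V\bigr) \geq n - \varepsilon$; letting $\varepsilon \to 0$ yields the conclusion.

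The main obstacle is bookkeeping rather than conceptual. One must verify that the parameters defining the modified Schmidt games, and hence the resulting McMullen-type measures, can be chosen \emph{uniformly} in $x \in T$ while the winning strategy simultaneously dodges every element of $Y$. The countable stability of the winning property supplies the diagonal strategy leaf by leaf; the substantive work is to check that this diagonal strategy, together with the associated tilings and atom-selection process, varies measurably in $x$, so that the family $\{\mu_x\}_{x \in T}$ can legitimately be integrated over $T$. Once that measurability is in place, every remaining estimate is a direct transcription of the argument used for Main Theorem~\ref{main2}.
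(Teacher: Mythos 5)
Your proposal is correct and follows essentially the same route as the paper: upgrade the winning input from $E_x(f,y)$ to $E_x(f,Y)$ via the diagonal (Schmidt-type) countable-intersection strategy, rerun the McMullen-type construction of fiber measures along the same coherent tilings, and confirm that the measurable dependence on the transversal parameter $x$ survives because at each turn Alice is still dodging only finitely many preimage sets $I_k(y_t)$. The paper's write-up makes that last point explicit — at turn $l+1$ with $l+1\equiv 2^{t-1}\pmod{2^t}$, the relevant $k$'s are bounded, so the finite-partition argument of the measurable-choice proposition goes through unchanged — which is exactly the bookkeeping you flagged as the substantive work.
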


To keep consistency, we will follow the proof scheme and notations in \cite{Wu}. We describe the setup for modified Schmidt games and the properties of a winning set in Section $2$. In Section $3$, we present a winning strategy for modified Schmidt games played on $W^u(x)$ to prove Main Theorem \ref{main}. Corollaries \ref{corollary} and \ref{corollary2} are also proved there. In Section $4$, we prove Main Theorem \ref{main2}. Corollary \ref{countable} is proved at the end.

We write $\sigma=\dim E^\sigma$, where $\sigma=u, c, s$. If $W$ is a submanifold of $M$, then $B^W(x,r)$ denotes the open ball centered at $x$ and of radius $r$ in $W$, with respect to the induced Riemannian metric on $W$. Similarly $B^u(x,r)$ denotes an open ball in $W^u(x)$. We always denote as $\nu$ the volume measures on various manifolds if it doesn't cause confusion. We suppose that $1< \sigma_1\leq \|(Tf|W^u)(z)\| \leq \sigma_2$ for any $z \in M$.

\section{Modified Schmidt games}
\subsection{Schmidt Games}

Let $(X, d)$ be a complete metric space. We denote as $B(x,r)$ the ball of radius $r$ with center $x$. If $\omega=(x,r) \in X \times \mathbf{R}_{+}$, we also write $B(\omega):=B(x,r)$.

Schmidt games are played by two players, Alice and Bob. Fix $0 < \alpha, \beta <1$ and a subset $S \subset X$ (the target set). Bob starts the game by choosing $x_1 \in X$ and $r_1 >0$ hence specifying a pair $\omega_1=(x_1, r_1)$. Then Alice chooses a pair $\omega'_1=(x_1', r_1')$ such that $B(\omega'_1) \subset B(\omega_1)$ and $r_1'=\alpha r_1$. In the second turn, Bob chooses a pair $\omega_2=(x_2, r_2)$ such that $B(\omega_2) \subset B(\omega_1')$ and $r_2= \beta r_1'$, and so on. In the $k$th turn, Bob and Alice choose $\omega_k=(x_k, r_k)$ and $\omega'_k=(x_k', r_k')$ respectively such that
\begin{equation*}
  B(\omega'_k) \subset B(\omega_k) \subset B(\omega_{k-1}'), \ \ r_k= \beta r_{k-1}', \ \ r_k'=\alpha r_k.
\end{equation*}
Thus we have a nested sequence of balls in $X$:
\begin{equation*}
B(\omega_1) \supset B(\omega_1') \supset \cdots \supset B(\omega_k) \supset B(\omega_k') \supset \cdots.
\end{equation*}
The intersection of all these balls consists of a unique point $x_{\infty} \in X$. We call Alice the winner if $x_{\infty} \in S$, and Bob the winner otherwise. $S$ is called an $(\alpha, \beta)$-winning set if Alice has a strategy to win regardless of how well Bob plays, and we call such a strategy an $(\alpha, \beta; S)$-winning strategy. $S$ is called $\alpha$-winning if it is $(\alpha, \beta)$-winning for any $0 <\beta <1$. $S$ is called a winning set if it is $\alpha$-winning for some $0 < \alpha <1$.

The following nice properties of a winning set are proved in \cite{S}.

\begin{proposition}
Some properties of winning sets for Schmidt games:
\begin{enumerate}
\item If the game is played on $X=\mathbb{R}^n$ with the Euclidean metric, then any winning set is dense and has full Hausdorff dimension $n$.
\item The intersection of countably many $\alpha$-winning sets is $\alpha$-winning.
\end{enumerate}
\end{proposition}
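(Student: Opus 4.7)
The plan is to prove the three claims---density, full Hausdorff dimension, and countable intersection---in turn. Density is essentially immediate: given a nonempty open $U \subset \mathbb{R}^n$, Bob's opening move $B(\omega_1)$ can be taken inside $U$, so the unique nested intersection point $x_\infty \in S$ lies in $B(\omega_1) \subset U$, whence $S \cap U \neq \emptyset$. The Hausdorff dimension lower bound is the substantive content. To establish it I fix $\alpha$ such that $S$ is $\alpha$-winning, let $\beta \in (0,1)$ be small, and construct a Cantor-like subset $K_\beta \subset S$ with $\dim_H K_\beta \to n$ as $\beta \to 0$.

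The construction proceeds inductively. At stage $k$ I maintain a family $\mathcal{F}_k$ of pairwise disjoint balls of radius $r_k := (\alpha\beta)^{k-1} r_1$, each of which is a legitimate Bob-position in a continuing play against Alice's winning strategy. For each $B \in \mathcal{F}_k$ I apply Alice's strategy to obtain $B' \subset B$ of radius $\alpha r_k$, and then pack inside $B'$ a maximal family of pairwise disjoint balls of radius $r_{k+1} = \alpha\beta r_k$. A Euclidean volume/separation estimate yields at least $N \geq c_n \beta^{-n}$ such sub-balls, where $c_n$ depends only on $n$, each of which is a valid Bob-move starting stage $k+1$. The resulting nested intersection $K_\beta$ lies in $S$ because along every branch Alice has followed her winning strategy, and a Moran cover / mass distribution argument gives
\[
\dim_H K_\beta \;\geq\; \frac{\log(c_n \beta^{-n})}{-\log(\alpha\beta)} \;\longrightarrow\; n \qquad \text{as } \beta \to 0,
\]
proving $\dim_H S = n$.

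For countable intersection, let $\{S_j\}_{j \geq 1}$ each be $\alpha$-winning and fix $\beta \in (0,1)$. The key observation is that sub-sampling an $(\alpha,\beta)$-play every $m$ turns yields a valid $(\alpha, \beta_m)$-play with $\beta_m = \alpha^{m-1} \beta^m$, since consecutive Bob balls on the sub-sequence have ratio $(\alpha\beta)^m = \alpha \beta_m$. I fix a schedule $j_1, j_2, \ldots$ hitting every index infinitely often, and for each $j$ choose a winning strategy $\tau_j$ for the $(\alpha, \beta_{m_j}; S_j)$-game, which exists because $S_j$ is $\alpha$-winning and hence $(\alpha, \beta_{m_j})$-winning for every $\beta_{m_j}$. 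In the actual $(\alpha,\beta)$-play, on turn $t$ Alice advances the virtual sub-play assigned to index $j_t$ using $\tau_{j_t}$; with standard bookkeeping every sub-play is advanced infinitely often and remains winning, so the unique limit lies in every $S_j$. The main obstacle is the dimension lower bound in (1): it requires both a clean Euclidean packing count and a mass-distribution conversion of the branching factor $N \sim \beta^{-n}$ against the contraction $\alpha\beta$ into a dimension approaching $n$, and this is precisely the step where the specific structure of $\mathbb{R}^n$ (polynomial volume growth of balls) enters essentially---the countable intersection argument, by contrast, is purely combinatorial scheduling once the sub-sampling observation is in hand.
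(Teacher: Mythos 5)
The paper does not prove this proposition; it is quoted from Schmidt \cite{S}, so there is no in-paper proof to compare against. Your sketch reconstructs Schmidt's original argument and is correct in substance: density is immediate once Bob can open inside $U$; the dimension bound follows from iterating Alice's strategy over a maximal packing of Bob-moves at each stage, with branching factor on the order of $\beta^{-n}$ against contraction $\alpha\beta$, giving a Cantor set whose dimension tends to $n$ as $\beta \to 0$; and the countable intersection is handled by interleaving virtual games.

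One step in (2) needs sharpening. You write that you ``fix a schedule $j_1, j_2, \ldots$ hitting every index infinitely often,'' but that hypothesis is too weak. To invoke a winning strategy $\tau_j$ for the $(\alpha, \beta_{m_j}; S_j)$-game, the restriction of the overall play to the turns assigned to index $j$ must itself be a legal $(\alpha, \beta_{m_j})$-play, and this requires those turns to be \emph{equally spaced}: only then do the consecutive virtual-Bob radii have the constant ratio $\alpha\beta_{m_j} = (\alpha\beta)^{m_j}$. A generic schedule recurring infinitely often has no such structure, and the sub-sampled play would have varying radius ratios, to which $\tau_j$ does not apply. The standard cure is the dyadic schedule in which index $j$ acts exactly at turns $k \equiv 2^{j-1} \pmod{2^j}$, so that $\mathbb{N}$ is partitioned into arithmetic progressions of gap $m_j = 2^j$. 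This is precisely the device the paper uses in its proof of Corollary \ref{countable}, where the additive analogue $b' = b + (2^t - 1)(a+b)$ of your multiplicative $\beta_m = \alpha^{m-1}\beta^m$ appears. With the schedule pinned down in this way your argument closes.
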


\subsection{Modified Schmidt games}
\subsubsection{A family of tilings}
In this subsection we describe the setups for modified Schmidt games. We postpone the definition of modified Schmidt games to the next subsection.
\begin{definition}\label{tiling}
Let $\delta, \tau >0$ be very small and let $D_0$ be an open and connected subset in $W^u(x)$ satisfying $B^u(x_0, \frac{(1-\tau)\delta}{2}) \subset D_0 \subset B^u(x_0, (1+\tau)\delta)$ for some $x_0\in W^u(x)$. If there exists a family of subsets $D_n^i$ ($n=1,2,\cdots $, $i=1,2,\cdots, k_n$) of $W^u(x)$ such that
 \begin{enumerate}
\item each $D_n^i$ is an open and connected subset of $W^u(x)$ satisfying
  $$B^u(f^n(x_n^i), \frac{(1-\tau)\delta}{2}) \subset f^n(D_n^i) \subset B^u(f^n(x_n^i), (1+\tau)\delta)$$
  for some $x_n^i \in W^u(x)$,
\item $D_n^i \cap D_n^j=\emptyset$ for every $n$ and $i \neq j$,
\item $D_0 \subset \bigcup_{i=1}^{k_n} \overline{D_n^i}$ and $D_n^i \cap D_0 \neq \emptyset$ for every $n$,
\end{enumerate}
then we say that $\{D_n^i\}$ form a family of $f$-induced $(D_0, \delta, \tau)$-local tilings on $W^u(x)$. We denote it as $\mathcal{T}$ which consists of countable local tilings $\{\mathcal{T}_n\}_{n=0}^{\infty}$ with $\mathcal{T}_n:=\{D_n^i\}_{i=1}^{k_n}$. Denote $\mathbf{T}_n=\bigcup_{i=1}^{k_n}D_n^i$ and $\mathbf{T}=\bigcup_{n=0}^\infty \mathbf{T}_n$. We call each $D_n^i$ an $n$-atom of $\mathcal{T}$.
\end{definition}

\begin{example}\label{exampletiling}
Let $S_n(\delta)$ be a maximal $\delta$-separated set in $f^n(D_0)$. For each $z\in S_n(\delta)$, define
$$D_n(z):=\{y\in W^u(f^n(x)): d(z,y)< d(w,y) \text{\ for any\ } w \in S_n(\delta) \text{\ and\ } w\neq z\}.$$
Then $\{f^{-n}(D_n(z)):z\in S_n(\delta)\}$ form a family of $f$-induced $(D_0, \delta, \tau)$-local tilings on $W^u(x)$.
\end{example}
\begin{proof}
(2) and (3) in Definition \ref{tiling} is obvious. For (1), it suffices to show that $B^u(z,\frac{\delta}{2}) \subset D_n(z) \subset B^u(z,\delta)$. Let $y\in B^u(z,\frac{\delta}{2})$, i.e. $d(z,y) < \frac{\delta}{2}$. Assume that $d(w,y)<\frac{\delta}{2}$ for some $w \in S_n(\delta)$ and $w\neq z$. One has that $d(z,w)\leq d(z,y)+d(w,y)<\frac{\delta}{2}+\frac{\delta}{2}=\delta$ which is impossible. Thus $d(z,y)<\frac{\delta}{2}\leq d(w,y)$ for any $w \in S_n(\delta)$ and $w\neq z$. Therefore, $B^u(z,\frac{\delta}{2}) \subset D_n(z)$.

On the other hand, let $y\in D_n(z)$. Assume that $d(z,y) \geq \delta$. Then $d(w,y) > d(z,y) \geq \delta$ for any $w \in S_n(\delta)$ and $w\neq z$. This is impossible since $S_n(\delta)$ is a maximal $\delta$-separated set. Therefore, $D_n(z) \subset B^u(z,\delta)$.
\end{proof}

\begin{remark}
In Example \ref{exampletiling}, we can choose $\tau >0$ to be arbitrarily small. The utility of $\tau$ will appear in Section 4 to guarantee the stability of a family of $f$-induced local tilings on $W^u(x)$ under perturbation.
\end{remark}
\begin{remark}
We have much freedom to choose a family of $f$-induced local tilings on $W^u(x)$. We do not require Markov property under $f$ for it. The essential property we will use is tiling and that the image of each atom under some iterate of $f$ has diameter comparable to $\delta$.
\end{remark}

Now let us fix a family of $f$-induced $(D_0, \delta, \tau)$-local tilings $\mathcal{T}$ on $W^u(x)$. If $z\in \mathbf{T}_n$ we use $D_n^i(z)$ to denote the atom in $\mathcal{T}$ which contains $z$. We also write $\psi(z,n)=D_n^i(z)$. We can define a partial order on $\Omega=\{(z,n):z\in \mathbf{T}_n\}$:
$$(z,n)\leq (z',n') \Leftrightarrow \psi(z,n) \subset \psi(z',n').$$
We study the following crucial properties of $\mathcal{T}$ before defining modified Schmidt games. We will follow the terminology in \cite{KW1}.
\begin{flushleft}
\textbf{(MSG0)} There exists $a_* \in \mathbb{N}_+$ such that the following property holds: for any $(z,n)\in \Omega$ and any $m>a_*$ there exists $z'\in \mathbf{T}_{m+n}$ such that $(z',m+n) \leq (z,n)$.
\end{flushleft}
\begin{flushleft}
  \textbf{(MSG1)} For any nonempty open set $U \subset D_0$, there is $(z,n)$ such that $\psi(z,n) \subset U$.
\end{flushleft}
\begin{flushleft}
\textbf{(MSG2)} There exist $C, \sigma >0$ such that $\text{diam}(\psi(\omega)) \leq C\exp(-\sigma n)$ for all $n > 0,  \omega=(z,n)$ where $z\in \mathbf{T}_n$.
\end{flushleft}
Let $\nu$ be the volume measure on $W^u(x)$. In addition, we state the following two properties:
\begin{flushleft}
\textbf{($\nu 1$)} $\nu(\psi(z,n)) >0, \text{\ for any\ } (z,n)\in \Omega$.
\end{flushleft}
We always choose $\delta>0$ in Definition \ref{tiling} to be small enough.
   \begin{flushleft}
    \textbf{($\nu 2$)} For any $a > a_*$, there exists $c=c(a)> 0$ satisfying the following property: for any $\omega=(z,n)\in \Omega$ and any $b>a_*$, there exist $\theta_i=(z_i,n+b)\in \Omega, \ i= 1, \cdots, N$ such that
    \begin{enumerate}
  \item $\psi(\theta_1), \cdots, \psi(\theta_N) \subset \psi(\omega)$ and they are essentially disjoint;
  \item for every $\psi(\theta_i') \subset \psi(\theta_i)$ where $\theta_i'=(z_i',n+a+b)$ one has
  $$\nu(\bigcup_{i=1}^N\psi(\theta_i')) \geq c\nu(\psi(\omega)).$$
\end{enumerate}
   \end{flushleft}

\begin{proposition}\label{gameproperty}
$\mathcal{T}$ satisfies properties (MSG 0-2) and ($\nu$ 1-2).
\end{proposition}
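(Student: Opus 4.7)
The plan is to obtain (MSG 2), (MSG 1), ($\nu 1$), and (MSG 0) as direct consequences of the sandwich structure together with the expansion bounds on $W^u$, and then concentrate on ($\nu 2$). For (MSG 2), I would use $f^n(\psi(\omega))\subset B^u(f^n(x_n^i),(1+\tau)\delta)$ and $\|Tf^{-1}|W^u\|\leq\sigma_1^{-1}$ to get $\operatorname{diam}(\psi(\omega))\leq 2(1+\tau)\delta\,\sigma_1^{-n}$, hence $C=2(1+\tau)\delta$ and $\sigma=\log\sigma_1$. (MSG 1) follows: any open $U\subset D_0$ contains a ball $B^u(z,\rho)$, and for sufficiently large $n$ the level-$n$ atom containing $z$ has diameter less than $\rho$ and therefore lies in $U$. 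Property ($\nu 1$) is immediate since each $\psi(z,n)$ is open and contains the $f^{-n}$-preimage of a ball of radius $(1-\tau)\delta/2$. For (MSG 0), the dual inclusion $f^m(B^u(p,r))\supset B^u(f^m(p),r\sigma_1^m)$ gives $f^{m+n}(\psi(z,n))\supset B^u(f^{m+n}(x_n^i),(1-\tau)\delta\sigma_1^m/2)$, and once $\sigma_1^m>2(1+\tau)/(1-\tau)$ this inner ball leaves enough room to contain an entire pushed-forward level-$(m+n)$ atom (whose diameter is $\leq 2(1+\tau)\delta$); one just picks an atom with center in the concentric sub-ball of radius $(1-\tau)\delta\sigma_1^m/2-(1+\tau)\delta$. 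This fixes $a_*:=\lceil\log(2(1+\tau)/(1-\tau))/\log\sigma_1\rceil$.

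For ($\nu 2$), I would take $\{\theta_i\}_{i=1}^N$ to be the collection of \emph{all} level-$(n+b)$ atoms contained in $\psi(\omega)$ and establish two uniform lower bounds whose product gives $c(a)$. The sub-atom ratio bound is the easier step: since $f^{n+b}(\psi(\theta_i))$ has diameter $\leq 2(1+\tau)\delta$, the Hölder distortion sum $\sum_{i\geq 0}(2(1+\tau)\delta)^\theta\sigma_1^{-i\theta}$ converges, so $f^{-(n+b)}$ has distortion bounded by a constant $K$ independent of $n,b$ on this set. Hence $\nu(\psi(\theta_i))\asymp\delta^u J(f^{-(n+b)})(y)$ for any $y\in f^{n+b}(\psi(\theta_i))$, and a parallel computation (plus the uniform bound $J(f^{-a})\geq\sigma_2^{-au}$) gives $\nu(\psi(\theta_i'))/\nu(\psi(\theta_i))\geq\lambda(a)\asymp\sigma_2^{-au}$ for any sub-atom $\theta_i'$ at level $n+a+b$. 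Combined with essential disjointness of the $\psi(\theta_i)$'s, \emph{any} choice of sub-atoms $\theta_i'\leq\theta_i$ then yields $\nu(\bigcup_i\psi(\theta_i'))=\sum_i\nu(\psi(\theta_i'))\geq\lambda(a)\sum_i\nu(\psi(\theta_i))$.

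The main obstacle is the complementary estimate $\sum_i\nu(\psi(\theta_i))\geq\mu\,\nu(\psi(\omega))$ with $\mu>0$ uniform in $n$ and $b$, since non-conformality of $f^b$ prevents direct geometric control of $f^{n+b}(\psi(\omega))$. My workaround is to work one step back, in the manifold $W^u(f^n x)$, where $U:=f^n(\psi(\omega))$ has diameter $\leq 2(1+\tau)\delta$ and so $f^{-n}$ has bounded distortion on it; hence ratios of $\nu$-measures of subsets of $\psi(\omega)$ are comparable, up to a constant, to ratios of $\tilde\nu$-measures of their $f^n$-images in $U$. In $W^u(f^n x)$ the pushed-forward level-$(n+b)$ atoms $f^n(\psi(\theta_j))$ have diameter at most $2(1+\tau)\delta\sigma_1^{-b}$ (being $f^{-b}$-preimages of sets of diameter $\leq 2(1+\tau)\delta$), so any ``bad'' atom crossing $\partial U$ lies within the $\epsilon$-neighborhood of $\partial U$ in $U$, with $\epsilon=2(1+\tau)\delta\sigma_1^{-b}$. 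Enlarging $a_*$ if necessary so that $\epsilon\leq(1-\tau)\delta/4$ for all $b>a_*$, the concentric ball $B^u(f^n(x_n^i),(1-\tau)\delta/4)\subset U$ lies entirely in the safe region and is covered by good atoms (any atom meeting it has diameter $\leq\epsilon\leq(1-\tau)\delta/4$ and hence remains inside $U$); it has $\tilde\nu$-measure $\geq c_u((1-\tau)\delta/4)^u$, while $\tilde\nu(U)\leq c_u((1+\tau)\delta)^u$. This yields $\mu\geq ((1-\tau)/(4(1+\tau)))^u/K$, uniformly in $n,b$. The conceptual key is that the sandwich structure lives in $W^u(f^n x)$ rather than $W^u(f^{n+b}x)$, where it is insensitive to the potentially severe non-conformality of $f^b$, so the universal proportionality survives.
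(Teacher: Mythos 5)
Your argument is essentially the paper's own: for (MSG0)--(MSG2) and $(\nu 1)$ the same use of the sandwich inclusion and expansion bounds; for $(\nu 2)$ the same decomposition into a sub-atom ratio (bounded distortion along $f^{n+b}$ together with a worst-case Jacobian $\sigma_2^{-au}$ for $f^{a}$) times a ``filling'' ratio $\sum_i\nu(\psi(\theta_i))/\nu(\psi(\omega))$, with the latter controlled by passing to $W^u(f^n x)$ where $f^n(\psi(\omega))$ has diameter $\asymp\delta$ and the bad boundary atoms live in an $O(\sigma_1^{-b}\delta)$-collar of $\partial(f^n(\psi(\omega)))$. The only discrepancies are harmless constant choices (the paper's $a_*$ satisfies $\sigma_1^{a_*}>4(1+\tau)/(1-\tau)$ rather than $2(1+\tau)/(1-\tau)$, which you later enlarge anyway) and, in (MSG0), the paper locates a sub-atom by pushing a point $w$ far from $\partial(f^n\psi(z,n))$ forward by $f^m$ and noting the atom containing it pulls back inside, rather than by exhibiting an atom center in a concentric sub-ball — the former avoids having to argue that a (possibly tiny) sub-ball must contain an atom center.
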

Before proving Proposition \ref{gameproperty}, we state the following bounded distortion lemma:
\begin{lemma}\label{BD}(Bounded distortion)
Let $c>0$ be small enough and $k \geq 0$. For any $z_1, z_2$ with $f^k(z_1)\in B^u(f^k(z_2),c)$, one has
$$\frac{1}{K} \leq \frac{\text{Jac}(f^k|W^u)(z_1)}{\text{Jac}(f^k|W^u)(z_2)} \leq K$$
for some $K=K(c)$, and $K \to 1$ as $c \to 0$.
\end{lemma}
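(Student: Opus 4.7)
The plan is the classical bounded distortion argument: write the ratio of Jacobians as a telescoping product along the orbit, use the uniform expansion of $f$ on unstable leaves to control how fast the two orbits converge as we iterate backward, and sum up Hölder estimates for $\log\text{Jac}(f|W^u)$ into a geometric series.

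First I would write
\[
\log \frac{\text{Jac}(f^k|W^u)(z_1)}{\text{Jac}(f^k|W^u)(z_2)} = \sum_{j=0}^{k-1} \Bigl( \log \text{Jac}(f|W^u)(f^j(z_1)) - \log \text{Jac}(f|W^u)(f^j(z_2)) \Bigr).
\]
Since $f$ is $C^{1+\theta}$ and the unstable leaves are $C^{1+\theta}$-immersed, $\log \text{Jac}(f|W^u)$ restricted to any unstable leaf is $\theta'$-Hölder for some $\theta' > 0$ (with uniform Hölder constant $L$ across leaves, by compactness of $M$). Thus each summand is bounded by $L\, d^u(f^j(z_1), f^j(z_2))^{\theta'}$, where $d^u$ denotes the distance along the appropriate unstable leaf.

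Next I would use backward contraction on unstable leaves. Since $\|(Tf|W^u)(z)\| \ge \sigma_1 > 1$ for all $z$, and since the assumption $f^k(z_1) \in B^u(f^k(z_2), c)$ ensures that the two terminal iterates lie on the same local unstable leaf, the orbit segments $\{f^j(z_i)\}_{j=0}^{k}$ ($i=1,2$) stay close on the same unstable leaf at every intermediate step. Provided $c$ is small enough that this closeness persists under backward iteration (here one may first slightly shrink $c$ so that the local leaves cover the relevant neighborhoods), one obtains
\[
d^u(f^j(z_1), f^j(z_2)) \le \sigma_1^{-(k-j)} \cdot d^u(f^k(z_1), f^k(z_2)) \le \sigma_1^{-(k-j)} c
\]
for $0 \le j \le k$.

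Combining these two ingredients gives
\[
\Bigl| \log \frac{\text{Jac}(f^k|W^u)(z_1)}{\text{Jac}(f^k|W^u)(z_2)} \Bigr| \le L c^{\theta'} \sum_{m=1}^{k} \sigma_1^{-m\theta'} \le \frac{L c^{\theta'}}{1 - \sigma_1^{-\theta'}},
\]
uniformly in $k$. Setting $K(c) := \exp\bigl( L c^{\theta'}/(1-\sigma_1^{-\theta'}) \bigr)$ yields the desired bound, and clearly $K(c) \to 1$ as $c \to 0$.

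The main (though standard) subtlety I would expect is the justification that $\log \text{Jac}(f|W^u)$ is Hölder continuous along each unstable leaf with a uniform constant, since a priori $E^u$ is only transversally Hölder over $M$. The point is that along a single unstable leaf this regularity is intrinsic and comes from $f|_{W^u}$ being $C^{1+\theta}$; I would cite this from \cite{BP} or \cite{PS}. A second small point is to verify that when $c$ is small enough, the ball $B^u(f^k(z_2), c)$ indeed sits inside a local unstable neighborhood on which backward iterates of $f$ remain genuinely on the same leaf, so that the contraction estimate applies leaf-wise.
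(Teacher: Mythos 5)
Your proposal follows the same route as the paper's proof: telescope $\log$ of the Jacobian ratio, apply Hölder continuity of $\log\text{Jac}(f|W^u)$ with exponent $\theta'$, use the uniform backward contraction $d^u(f^j(z_1),f^j(z_2))\le\sigma_1^{-(k-j)}c$, and sum the geometric series to get a bound uniform in $k$. The only cosmetic difference is that your constant $Lc^{\theta'}/(1-\sigma_1^{-\theta'})$ is an extra factor of $\sigma_1^{\theta'}$ looser than the paper's tighter $lc^{\theta'}/(\sigma_1^{\theta'}-1)$, which does not affect the conclusion.
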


\begin{proof}
Since $z\mapsto \text{Jac}(f|W^u)(z)$ is H\"{o}lder continuous, $\log \text{Jac}(f|W^u)$ is as well and thus there exist $l>0$, $0 < \theta' <1$ such that
$$\left|\log \text{Jac}(f|W^u)(z_1)-\log \text{Jac}(f|W^u)(z_2)\right| \leq l (d^u(z_1, z_2))^{\theta'}$$
for nearby $z_1$ and $z_2$. For any $z_1, z_2$ with $f^k(z_1)\in B^u(f^k(z_2),c)$, one has
\begin{equation*}
d^u(f^i(z_1), f^i(z_2)) \leq \frac{c}{(\sigma_1)^{k-i}}, \ \ \text{for\ \ } \forall 0 \leq i \leq k.
\end{equation*}
Thus,
\begin{equation*}
\begin{aligned}
\left|\log \frac{\text{Jac}(f^k|W^u)(z_1)}{\text{Jac}(f^k|W^u)(z_2)}\right| &\leq \sum_{i=0}^{k-1}\left|\log \text{Jac}(f|W^u)(f^i(z_1)) - \log \text{Jac}(f|W^u)(f^i(z_2))\right|
 \\ &\leq \sum_{i=0}^{k-1}l(d^u(f^i(z_1), f^i(z_2)))^{\theta'}
\\&\leq \sum_{i=0}^{k-1}\frac{lc^{\theta'}}{(\sigma_1)^{\theta' (k-i)}} \leq \frac{lc^{\theta'}}{(\sigma_1)^{\theta'}-1}.
\end{aligned}
\end{equation*}
Hence $$\frac{1}{K} \leq \frac{\text{Jac}(f|W^u)(z_1)}{\text{Jac}(f|W^u)(z_2)}\leq K,$$
where $K=\exp(\frac{lc^{\theta'}}{(\sigma_1)^{\theta'}-1})$.
\end{proof}

\begin{proof}[Proof of Proposition \ref{gameproperty}]
(MSG0): We take $a_*$ large enough such that
\begin{equation}\label{e:astar}
\frac{(1+\tau)\delta}{\sigma_1^{a_*}} < \frac{(1-\tau)\delta}{4}.
\end{equation}
Let $m>a_*$. Consider the atom $f^n(\psi(z,n))$ on $W^u(f^n(x))$. By definition there exists $z_n^i\in W^u(x)$ such that
$$B^u(f^n(z_n^i), \frac{(1-\tau)\delta}{2}) \subset f^n(\psi(z,n)) \subset B^u(f^n(z_n^i), (1+\tau)\delta).$$
So there exists $w\in f^n(\psi(z,n))$ such that $d^u(w, \partial f^n(\psi(z,n)))>\frac{(1-\tau)\delta}{2}$ and $f^m(w) \in f^{m+n}(D_{m+n}^j)$ for some $D_{m+n}^j\in \mathcal{T}_{m+n}$. So $w\in f^n(D_{m+n}^j)$ and by \eqref{e:astar}
$$\text{diam}(f^n(D_{m+n}^j)) \leq\frac{2(1+\tau)\delta}{\sigma_1^m} <\frac{2(1+\tau)\delta}{\sigma_1^{a_*}}< \frac{(1-\tau)\delta}{2}.$$
This implies that $f^n(D_{m+n}^j) \subset f^n(\psi(z,n))$. Thus $D_{m+n}^j \subset \psi(z,n)$.

(MSG1): This is clear by the tiling property (3) in Definition \ref{tiling} and the fact that the diameter of an $n$-atom decreases to zero as $n\to \infty$.

(MSG2): By property (1) in Definition \ref{tiling} and $\|(Tf|W^u)(z)\| \geq \sigma_1$ for any $z\in M$, one has
$$\text{diam}(\psi(z,n)) \leq\frac{2(1+\tau)\delta}{\sigma_1^n}.$$
Set $C=2(1+\tau)\delta$ and $\sigma=\log \sigma_1$, then we have $\text{diam}(\psi(z,n))\leq C\exp(-\sigma n)$.

($\nu$1): It is clear beacuse $\psi(z,n)$ is a nonempty open subset of $W^u(x)$.

($\nu$2): Let us choose $\delta, \tau>0$ in Definition \ref{tiling} to be small enough. We take $\theta_1=(z_1,n+b), \cdots, \theta_N=(z_N,n+b)\in \Omega$ to be all the $(n+b)$-atoms with $\psi(\theta_i) \subset \psi(\omega)$. Recall that $B^u(f^n(x_n^i), \frac{(1-\tau)\delta}{2}) \subset f^n(\psi(\omega)) \subset B^u(f^n(x_n^i), (1+\tau)\delta)$ for some $x_n^i \in W^u(x)$. Note that for any $\psi(\theta) \in \mathcal{T}_{n+b}$ with $f^n(\psi(\theta))\cap \partial (f^n(\psi(\omega)))\neq \emptyset$ one has $\text{diam}(f^n(\psi(\theta)))\leq \frac{2(1+\tau)\delta}{\sigma_1^b}$. Therefore,
\begin{equation*}
\begin{aligned}
\bigcup_{i=1}^Nf^n(\psi(\theta_i)) &\supset B^u\left(f^n(x_n^i), \frac{(1-\tau)\delta}{2}-\frac{2(1+\tau)\delta}{\sigma_1^b}\right)\\
&\supset B^u\left(f^n(x_n^i), \frac{(1-\tau)\delta}{2}-\frac{2(1+\tau)\delta}{\sigma_1^{a_*}}\right).
\end{aligned}
\end{equation*}
Note that $\frac{(1-\tau)\delta}{2}-\frac{2(1+\tau)\delta}{\sigma_1^{a_*}} >0$ by \eqref{e:astar}. Then
$$\frac{\nu(\bigcup_{i=1}^Nf^n(\psi(\theta_i)))}{\nu(f^n(\psi(\omega)))}\geq C\cdot \left(\frac{\frac{(1-\tau)\delta}{2}-\frac{2(1+\tau)\delta}{\sigma_1^{a_*}}}{(1+\tau)\delta}\right)^u=C\cdot\left(\frac{(1-\tau)}{2(1+\tau)}-\frac{2}{\sigma_1^{a_*}}
\right)^u.$$
One also has
$$\frac{\nu(\bigcup_{i=1}^Nf^{n+b}(\psi(\theta'_i)))}{\nu(\bigcup_{i=1}^Nf^{n+b}(\psi(\theta_i)))}\geq C\cdot \left(\frac{\frac{(1-\tau)\delta}{2\sigma_2^{a}}}{(1+\tau)\delta}\right)^u=C\cdot\left(\frac{1-\tau}{2\sigma_2^{a}(1+\tau)}
\right)^u.$$
Finally we use the bounded distortion Lemma \ref{BD} to obtain the following estimates on volume:
\begin{equation*}
\begin{aligned}
\frac{\nu(\bigcup_{i=1}^N\psi(\theta'_i))}{\nu(\psi(\omega))}&=\frac{\nu(\bigcup_{i=1}^N\psi(\theta'_i))}{\nu(\bigcup_{i=1}^N\psi(\theta_i))}\cdot\frac{\nu(\bigcup_{i=1}^N\psi(\theta_i))}{\nu(\psi(\omega))}\\
&\geq\frac{\nu(\bigcup_{i=1}^Nf^{n+b}(\psi(\theta'_i)))}{K\nu(\bigcup_{i=1}^Nf^{n+b}(\psi(\theta_i)))}\cdot\frac{\nu(\bigcup_{i=1}^Nf^n(\psi(\theta_i)))}{K\nu(f^n(\psi(\omega)))}\\
&\geq\frac{C^2}{K^2}\cdot\left(\frac{1-\tau}{2\sigma_2^{a}(1+\tau)}
\right)^u\cdot\left(\frac{(1-\tau)}{2(1+\tau)}-\frac{2}{\sigma_1^{a_*}}\right)^u.\\
&:=c(a)
\end{aligned}
\end{equation*}
where $K=K(2(1+\tau)\delta)$ as in Lemma \ref{BD} is close to $1$ since $\delta, \tau>0$ are very small.
\end{proof}

\subsubsection{Modified Schmidt games}
We define a type of modified Schmidt games induced by $f$ on $W^u(x)$ with respect to $\mathcal{T}$ as follows. Fix $a, b \in \mathbb{N}_+$ both larger than $a_*$ and a subset $S$ of $W^u(x)$ (the target set). Bob starts the $(a,b)$-game by choosing a pair $\omega_1=(z_1,n_1)\in \Omega$, hence specifying a $\mathcal{T}_{n_1}$-atom $D_{n_1}(z_1)$. By virtue of (MSG0), Alice can choose a pair $\omega'_1=(z'_1,n'_1)$ such that $\omega'_1\leq \omega_1$ and $n'_1=n_1+a$. In the second turn, Bob chooses a pair $\omega_2=(z_2, n_2)$ such that $\omega_2 \leq \omega_1'$ and $n_2= n_1'+b$, and so on. In the $k$th turn, Bob and Alice choose $\omega_k=(z_k, n_k)$ and $\omega'_k=(z_k', n_k')$ respectively such that
\begin{equation*}
  \psi(\omega'_k) \subset \psi(\omega_k) \subset \psi(\omega_{k-1}'), \ \ n_k= n_{k-1}'+b, \ \ n_k'=n_k+a.
\end{equation*}
Note that Bob and Alice can always make their choices by virtue of (MSG0). Thus we have a nested sequence of atoms in $\mathcal{T}$:
\begin{equation} \label{e:sequence}
\psi(\omega_1) \supset \psi(\omega_1') \supset \cdots \supset \psi(\omega_k) \supset \psi(\omega_k') \supset \cdots.
\end{equation}
By (MSG2), the intersection of all these atoms consists of a unique point $z_{\infty} \in W^u(x)$ since $W^u(x)$ is a complete metric space. We call Alice the winner if $z_{\infty} \in S$, and Bob the winner otherwise. $S$ is called a $(a, b)$-winning set for \emph{modified Schmidt games} if Alice has a strategy to win regardless of how well Bob plays, and we call such a strategy an $(a, b; S)$-winning strategy. $S$ is called $a$-winning if it is $(a,b)$-winning for any $b >a_*$. $S$ is called a winning set if it is $a$-winning for some $a > a_*$.

We have the following nice properties of a winning set:
\begin{proposition}\label{winproperty}
Some properties of winning sets for modified Schmidt games induced by $f$ on $W^u(x)$ with respect to $\mathcal{T}$:
\begin{enumerate}
\item An $(a,b)$-winning set is dense in $D_0 \subset W^u(x)$.
\item The intersection of countably many $a$-winning sets is $a$-winning.
\end{enumerate}
\end{proposition}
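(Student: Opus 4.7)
The plan is to treat the two parts separately, both times following Schmidt's classical ideas adapted to the combinatorial (rather than metric) setup of modified Schmidt games.

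For (1), let $S$ be $(a,b)$-winning and let $U \subset D_0$ be a nonempty open set. By (MSG1) there exists $(z,n) \in \Omega$ with $\psi(z,n) \subset U$, so I can imagine Bob opens the $(a,b)$-game with $\omega_1 := (z,n)$. Alice's winning strategy then produces a limit point $z_{\infty} \in S$ lying in every atom of the resulting nested sequence, and in particular in $\psi(\omega_1) \subset U$. Hence $S$ meets every nonempty open subset of $D_0$, proving density.

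For (2), fix $a$-winning sets $S_1, S_2, \ldots$ and any $b > a_*$; the goal is to exhibit an $(a, b; \bigcap_i S_i)$-winning strategy. I will schedule a family of ``virtual games'' $G_1, G_2, \ldots$ in parallel via the $2$-adic partition $r = 2^{i-1}(2k-1)$ of $\mathbb{N}_+$: the real round $r$ is dedicated to game $G_i$ and plays as its $k$-th round, so consecutive $G_i$-rounds are separated by $c_i := 2^i$ real rounds. Set $b_i := c_i(a+b) - a = 2^i(a+b) - a$; since $a, b > a_*$ one checks $b_i > a_*$, so an $(a, b_i; S_i)$-winning strategy $\Sigma_i$ is available by the hypothesis that $S_i$ is $a$-winning. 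Alice's real-round-$r$ response is then defined as $\Sigma_i$'s response to the real Bob move $\omega_r$, regarded as Bob's $k$-th $G_i$-move.

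It remains to verify that the induced virtual play really is a legal $(a, b_i)$-game in each $G_i$: the identity $a + b_i = c_i(a+b)$ ensures the $n$-values of consecutive $G_i$-moves advance correctly (one checks that Bob's $k$-th $G_i$-move has $n$-value $n_r$ at $r = 2^{i-1}(2k-1)$), and Bob's $(k+1)$-th $G_i$-move lies in Alice's previous $G_i$-atom automatically from the nesting of the real game sequence. Since each $\Sigma_i$ is winning, the $G_i$-atoms shrink to a point in $S_i$; but these atoms form a subsequence of the real game's nested atom sequence, which by (MSG2) converges to a single point $z_{\infty} \in W^u(x)$, so $z_{\infty} \in \bigcap_i S_i$. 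The main obstacle I anticipate is precisely this bookkeeping — choosing a schedule that simultaneously partitions $\mathbb{N}_+$, keeps each $b_i$ above $a_*$, and makes each virtual play a legal modified Schmidt game — but the $2$-adic schedule with $c_i = 2^i$ makes all three checks mechanical.
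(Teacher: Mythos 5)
Your argument for both parts is correct and essentially the same as the paper's. Part (1) is the paper's density argument stated directly rather than by contradiction, and for part (2) your $2$-adic schedule with $b_i = 2^i(a+b)-a$ is precisely the paper's choice (it writes the parameter as $b+(2^t-1)(a+b)$, which simplifies to the same quantity), matching the proof the paper defers to Corollary~\ref{countable}.
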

\begin{proof}
Assume that an $(a,b)$-winning set $S$ is not dense in $D_0 \subset W^u(x)$. Then there exists a nonempty open set $U \subset D_0$ with $S\cap U =\emptyset$. By (MSG1), we can let Bob start the $(a,b)$-game by choosing an atom $\psi(z,n) \subset U$. Then Alice is impossible to win the game. This contradicts to the assumption that $S$ is $(a,b)$-winning. Thus $S$ is dense in $D_0$.

The proof of (2) is a modification of the one of Theorem 2 in \cite{S}. It will appear in the proof of Corollary \ref{countable} again, so we omit it here.
\end{proof}

Before giving the estimates on the Hausdorff dimension of a winning set, we state the following construction for producing fractal sets, which has been described in \cite{Mc}, \cite{U}, \cite{KM}, \cite{KW1}, \cite{Wu} and many others. Let $A_0 \subset X$ be a compact subset of a Riemannian manifold $X$ and let $m$ be a Borel measure on $X$. For each $l\in \mathbb{N}_0$, let $\mathcal{A}_l$ denote a finite collection of compact subsets of $A_0$ satisfying the following conditions:
\begin{equation} \label{e:tree1}
\mathcal{A}_0 =\{A_0\} \text{\ and\ } m(A_0) >0;
\end{equation}
\begin{equation} \label{e:tree2}
\text{For all\ } l \in \mathbb{N}, \ \text{if\ } A, B \in \mathcal{A}_l \text{\ and\ } A \neq B, \text{\ then\ } m(A \cap B)=0;
\end{equation}
\begin{equation} \label{e:tree3}
\text{For all\ } l \in \mathbb{N}, \text{\ every element }B \in \mathcal{A}_l \text{\ is contained in an element\ } A \in \mathcal{A}_{l-1}.
\end{equation}
Let $\mathcal{A}$ be the union of subcollections $\mathcal{A}_l, l\in \mathbb{N}_0$. Then $\mathcal{A}$ is called \emph{tree-like} if it satisfies conditions \eqref{e:tree1},\eqref{e:tree2},\eqref{e:tree3}. $\mathcal{A}$ is called \emph{strongly tree-like} if it is tree-like and in addition:
\begin{equation} \label{e:tree4}
d_l(\mathcal{A}):= \sup\{\text{diam}(A): A \in \mathcal{A}_l\} \to 0 \text{\ as\ } l \to \infty.
\end{equation}

For each $l \in \mathbb{N}_0$, denote $\mathbf{A}_l=\cup_{A\in \mathcal{A}_l}A$. Then we can define the limit set of $\mathcal{A}$ to be
$$\mathbf{A}_{\infty}:=\bigcap_{l\in \mathbb{N}_0} \mathbf{A}_l.$$
For any Borel $B \subset A_0$ with $m(B) >0$ and $l \in \mathbb{N}$, define the $l$th stage density $\delta_l(B, \mathcal{A})$ of $\mathcal{A}$ in $B$ by
\begin{equation*}
\delta_l(B, \mathcal{A})=\frac{m(\mathbf{A}_l \cap B)}{m(B)}.
\end{equation*}
Assume that
\begin{equation} \label{e:tree5}
\Delta_l(\mathcal{A}):=\inf_{B\in \mathcal{A}_l}\delta_{l+1}(B, \mathcal{A}) >0.
\end{equation}

We state the following lemma without proof. Note that it is also essential to our construction of measures in Section 4. For its proof, see \cite{Mc}, \cite{U}, \cite{Wu}, etc.
\begin{lemma} \label{treelike}
Let $\mathcal{A}$ be defined as above, satisfying conditions \eqref{e:tree1}--\eqref{e:tree5}. Assume that there exist constants $D>0$ and $k>0$ such that for any $z \in A_0$,
\begin{equation*}
m(B(z,r)) \leq Dr^k.
\end{equation*}
Denote
\begin{equation*}
\epsilon:=\limsup_{l \to \infty}\frac{\sum_{i=0}^{l}\log(\frac{1}{\Delta_i(\mathcal{A})})}{\log(\frac{1}{d_l(\mathcal{A})})},
\end{equation*}
then there exists a sequence of measures $\mu^{(l)}$ supported on $\mathbf{A}_l$ such that:
\begin{enumerate}
  \item The sequence $\mu^{(l)}$ has a unique limit measure $\bar{\mu}$, which is supported on $\mathbf{A}_{\infty}$;
  \item $\bar{\mu}(B(z,r)) \leq Cr^{k-\epsilon}$ for any $z\in \mathbf{A}_\infty$, $r>0$, and some $C>0$;
  \item  $\dim_H(\mathbf{A}_{\infty}) \geq k-\epsilon$.
\end{enumerate}

\end{lemma}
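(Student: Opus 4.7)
The plan is to construct the measures $\mu^{(l)}$ by inductive mass distribution, extract a weak-$*$ limit $\bar\mu$ supported on $\mathbf{A}_\infty$, and then apply Frostman's mass distribution principle after estimating $\bar\mu$ of small balls.

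First I would set $\mu^{(0)}=m|_{A_0}$ and, given $\mu^{(l-1)}$, define $\mu^{(l)}$ on every atom $A\in\mathcal{A}_{l-1}$ by redistributing the mass $\mu^{(l-1)}(A)$ among its children $B\in\mathcal{A}_l$ in proportion to $m(B)$, that is,
$$\mu^{(l)}|_B \;=\; \frac{\mu^{(l-1)}(A)}{m(\mathbf{A}_l\cap A)}\,m|_B.$$
Condition \eqref{e:tree3} guarantees every level-$l$ atom has a unique parent, \eqref{e:tree2} that the pieces fit together without overlap, and \eqref{e:tree5} that the denominator is positive. By construction, for every $A\in\mathcal{A}_j$ and every $l\ge j$ one has $\mu^{(l)}(A)=\mu^{(j)}(A)$, so $\{\mu^{(l)}\}$ is a consistent sequence of finite measures of common total mass $m(A_0)$ on the compact set $A_0$; weak-$*$ compactness produces a limit $\bar\mu$, and \eqref{e:tree4} together with this consistency forces $\mathrm{supp}\,\bar\mu\subset\mathbf{A}_\infty$.

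The core estimate is a density bound. For $B\in\mathcal{A}_l$ with parent $A\in\mathcal{A}_{l-1}$ the recursion gives $\mu^{(l)}(B)/m(B)=\mu^{(l-1)}(A)/(m(A)\,\delta_l(A,\mathcal{A}))$, and since $\delta_l(A,\mathcal{A})\ge\Delta_{l-1}(\mathcal{A})$, unfolding yields $\mu^{(l)}(B)/m(B)\le 1/(m(A_0)\prod_{i=0}^{l-1}\Delta_i(\mathcal{A}))$ for every $B\in\mathcal{A}_l$. Given a ball $B(z,r)$ with $z\in\mathbf{A}_\infty$ and $r$ small, I would pick $l=l(r)$ to be the largest integer with $d_l(\mathcal{A})\ge r$. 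Every $B\in\mathcal{A}_{l+1}$ meeting $B(z,r)$ then has diameter less than $r$ and is contained in $B(z,2r)$, so essential disjointness and the density bound give
$$\bar\mu(B(z,r)) \;\le\; \sum_{B\in\mathcal{A}_{l+1},\,B\cap B(z,r)\neq\emptyset}\bar\mu(B) \;\le\; \frac{m(B(z,2r))}{m(A_0)\prod_{i=0}^{l}\Delta_i(\mathcal{A})} \;\le\; \frac{D\cdot 2^k\, r^k}{m(A_0)\prod_{i=0}^{l}\Delta_i(\mathcal{A})}.$$
Fixing $\epsilon'>\epsilon$, the definition of $\epsilon$ yields $\prod_{i=0}^l\Delta_i(\mathcal{A})^{-1}\le d_l(\mathcal{A})^{-\epsilon'}\le r^{-\epsilon'}$ for all sufficiently large $l$, and hence $\bar\mu(B(z,r))\le Cr^{k-\epsilon'}$; letting $\epsilon'\searrow\epsilon$ proves (2). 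The mass distribution principle then gives $\dim_H\mathbf{A}_\infty\ge k-\epsilon$, which is (3), while (1) is immediate from the construction.

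The main obstacle is the coupling of $r$ to the partial product $\prod_{i=0}^{l}\Delta_i^{-1}$: the choice $d_{l+1}<r\le d_l$ is the only one that simultaneously makes the level-$(l+1)$ atoms small enough to cover $B(z,r)$ efficiently and lets the bound $d_l^{-\epsilon'}\le r^{-\epsilon'}$ take effect, and the $\limsup$ in the definition of $\epsilon$ is tuned precisely so that this comparison yields the exponent $k-\epsilon$. Care is also required to keep $\bar\mu$ nontrivial and to absorb the finitely many initial scales at which the asymptotic bound governing $\epsilon$ has not yet kicked in into the constant $C$; nontriviality follows because $\mu^{(l)}(A_0)=m(A_0)>0$ is preserved at every stage, so the weak-$*$ limit has total mass $m(A_0)$.
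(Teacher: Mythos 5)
The paper does not prove this lemma; it cites \cite{Mc}, \cite{U}, \cite{Wu}, and your argument is precisely the standard McMullen-type mass-distribution construction used there, so the approach is essentially the same and is correct. One small caveat: ``letting $\epsilon'\searrow\epsilon$'' does not by itself give a single constant $C$ in item (2), since the threshold index past which $\prod_{i=0}^{l}\Delta_i(\mathcal{A})^{-1}\le d_l(\mathcal{A})^{-\epsilon'}$ holds depends on $\epsilon'$ (and so does the resulting $C$); this is a feature of the lemma's phrasing rather than a flaw in your argument, and it does not affect (3) or any use made of the lemma in the paper, where it is always invoked with a fixed $\epsilon'>\epsilon$.
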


\begin{proposition} \label{schmidtgame}
Let $S\subset W^u(x)$ be an $a$-winning set for modified Schmidt games induced by $f$ with respect to $\mathcal{T}$. Then
\begin{enumerate}
  \item for any open $U\subset D_0$, one has
\begin{equation*}
\dim_H(S \cap U) = u;
\end{equation*}
  \item for any $\epsilon>0$, there exist a sequence of measures $\{\mu_x^{(l)}\}_{l=0}^{\infty}$ and $\{\mu_x\}$ such that:
\begin{enumerate}
  \item $\mu_x^{(l)}\text{\ is supported on\ } D_0 \text{\ and\ }\mu_x \text{\ is supported on\ } S \subset D_0$,
  \item $\mu_x \text{\ is the unique weak limit of\ } \mu_x^{(l)}$,
  \item for any $z\in D_0\cap \text{supp}(\mu_x)$, $r>0$ small enough,
\begin{equation*}
\mu_x (B^u(z, r)) \leq Cr^{u-\epsilon}.
\end{equation*}
\end{enumerate}
\end{enumerate}

\end{proposition}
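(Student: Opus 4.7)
The upper bound $\dim_H(S \cap U) \leq u$ in part~(1) is immediate, since $S \cap U$ is contained in the $u$-dimensional manifold $W^u(x)$. My plan for the lower bound in (1) and for part (2) is to deduce both from a single construction: feed Alice's $(a,b;S)$-winning strategy into the strongly tree-like machinery of Lemma~\ref{treelike}, using ($\nu$2) to supply the branching and the density estimate and (MSG2) to supply the diameter decay.

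First I use (MSG1) to pick an atom $\psi(\omega^\circ) \subset U$ and let Alice play her opening response to produce the root $A_0 := \psi((\omega^\circ)') \subset U \cap D_0$. To pass from level $l$ to level $l+1$, at every node $\psi(\omega_l') \in \mathcal{A}_l$ I invoke ($\nu$2) with $\omega = \omega_l'$ to obtain the $N$ essentially disjoint atoms $\theta_1,\ldots,\theta_N$ of depth $b$ inside $\psi(\omega_l')$. For each $\theta_i$, Alice's winning strategy (applied to the branch history ending with the move $\theta_i$) selects a reply $\theta_i'$ of additional depth $a$, and I declare $\{\psi(\theta_i')\}_{i=1}^N$ to be the children of $\psi(\omega_l')$ in $\mathcal{A}_{l+1}$. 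Every infinite branch is a legal play of the $(a,b)$-game in which Alice follows her winning strategy, so the unique point in the nested intersection must lie in $S$; hence $\mathbf{A}_\infty \subset S$.

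Properties \eqref{e:tree1}--\eqref{e:tree3} are built into the construction, while \eqref{e:tree4} follows from (MSG2): every node at level $l$ has depth at least $l(a+b)$ above the root, giving $d_l(\mathcal{A}) \leq C\exp(-\sigma l(a+b))$ up to initial constants. Property \eqref{e:tree5} is exactly what ($\nu$2) delivers, since conclusion~(2) of ($\nu$2) yields $\Delta_l(\mathcal{A}) \geq c(a)$ independently of $l$. Taking $m=\nu$ (the volume measure on $W^u(x)$, which satisfies $\nu(B^u(z,r)) \leq Dr^u$ uniformly on the compact closure of $D_0$), Lemma~\ref{treelike} produces measures $\mu_x^{(l)}$ supported on $\mathbf{A}_l \subset D_0$ converging weakly to $\mu_x$ supported on $\mathbf{A}_\infty \subset S$ and satisfying
$$
\mu_x(B^u(z,r)) \leq Cr^{u-\epsilon}, \qquad \epsilon \;\leq\; \frac{\log(1/c(a))}{\sigma(a+b)}.
$$
Because $c(a)$ depends only on $a$, taking $b$ large enough makes $\epsilon$ arbitrarily small, which proves (2); and Lemma~\ref{treelike}(3) gives $\dim_H(S \cap U) \geq \dim_H(\mathbf{A}_\infty) \geq u-\epsilon$, so letting $b\to\infty$ yields (1).

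The step I expect to require the most care is the consistency of Alice's strategy along every branch of the tree: her reply at level $l+1$ depends formally on the full history of Bob's prior moves, so different branches receive different responses. This is not really an obstacle once one observes that a strategy is by definition a function of history and that restricting Bob to the specific $N$ atoms $\theta_1,\ldots,\theta_N$ supplied by ($\nu$2) is still a legal game, against which Alice's unrestricted winning strategy must continue to succeed; the key point is only to record each branch as its own sequence of moves before invoking the strategy.
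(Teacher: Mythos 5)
Your proposal follows essentially the same approach as the paper: use (MSG1) to place the root atom inside $U$, build a strongly tree-like family $\mathcal{A}$ by alternating the $N$ atoms furnished by ($\nu$2) with Alice's winning replies, verify \eqref{e:tree1}--\eqref{e:tree5} via ($\nu$1), ($\nu$2), and (MSG2), and feed the result into Lemma~\ref{treelike} with $m=\nu$ to get both the measures in part~(2) and the estimate $\dim_H(\mathbf{A}_\infty)\geq u-\frac{\log(1/c(a))}{\sigma(a+b)}\to u$ as $b\to\infty$. Your extra observation that each tree branch is recorded as its own game history before invoking the strategy is a correct (and worth-noting) clarification of a point the paper leaves implicit.
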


\begin{proof}
Fix $a,b>a_*$ and consider $(a, b)$-modified Schmidt games. We will construct a family $\mathcal{A}$ satisfying \eqref{e:tree1}--\eqref{e:tree5} whose limit set $\mathbf{A}_{\infty}$ is a subset of $S\cap U$. It will be constructed by considering possible moves for Bob at each stage and Alice's corresponding counter-moves, which give us different sequences as in \eqref{e:sequence}. We will take the measure $m$ in Lemma \ref{treelike} to be the volume measure $\nu$ on $W^u(x)$. By (MSG1), Bob may begin the game by choosing an atom $\psi(\omega_1) \subset U$. Since $S$ is winning, Alice can choose a ball $\psi(\omega_1')$ which has nonempty intersection with $S$. Take $A_0:= \psi(\omega_1')$; then \eqref{e:tree1} is satisfied by \textbf{($\nu 1$)}.

By ($\nu$2), $B(\omega_1')$ contains $N=N(\omega_1')$ essentially disjoint atoms $\psi(\omega_2^{(1)}),\cdots, \psi(\omega_2^{(N)})$, and each of them could be chosen by Bob as $\psi(\omega_2)$. For each of such choice $\psi(\omega_2^{(i)})$ of Bob, Alice can pick an atom $\psi((\omega')_2^{(i)}) \subset \psi(\omega_2^{(i)})$. Let $\mathcal{A}_1$ be the collection of $N(\omega_1')$ atoms $\psi((\omega')_2^{(i)})$, i.e., the atoms chosen by Alice. Repeating the same for each turn of the game, we obtain $\mathcal{A}_2$, $\mathcal{A}_3$, etc. \eqref{e:tree2} and \eqref{e:tree3} are immediate from the construction above. By (MSG2), $d_l(\mathcal{A}) \leq C\exp(-\sigma(n_1+a+l(a+b)))\to 0$ as $l\to \infty$. Thus we get \eqref{e:tree4}. By ($\nu$2) one has
\begin{equation*}
\Delta_l(\mathcal{A})=\inf\frac{\nu\left(\bigcup_{i=1}^{N(\omega'_{l+1})}\psi(\omega'_{l+2})\right)}{\nu(\psi(\omega'_{l+1}))}\geq c(a)>0.
\end{equation*}
Thus we get \eqref{e:tree5}. By Lemma \ref{treelike},
\begin{equation*}
\begin{aligned}
\dim_H(\mathbf{A}_{\infty})
&\geq u-\limsup_{l \to \infty}\frac{\sum_{i=0}^{l}\log(\frac{1}{\Delta_i(\mathcal{A})})}{\log(\frac{1}{d_l(\mathcal{A})})}
\\ &\geq u-\limsup_{l \to \infty}\frac{(l+1)\log \frac{1}{c(a)}}{-\log C+\sigma(n_1+a+l(a+b))}
\\&=u-\frac{\log \frac{1}{c(a)}}{\sigma(a+b)}
\\ & \to u  \ \ \ \text{as\ } b \to +\infty.
\end{aligned}
\end{equation*}
For any $\epsilon >0$, we choose $b$ large enough such that $\frac{\log \frac{1}{c(a)}}{\sigma(a+b)}<\epsilon$. Application of Lemma \ref{treelike} also gives us the measures $\{\mu_x^{(l)}\}_{l=0}^{\infty}$ and $\{\mu_x\}$ as required in (2) of the present proposition.
\end{proof}

\section{$E_x(f,y)$ is winning}
In this section, we prove that the set $E_x(f,y)$ is a winning set for modified Schmidt games induced by $f$ on $W^u(x)$ with respect to $\mathcal{T}$, which is defined in the previous section. The following lemma is useful.

\begin{lemma}\label{avoid}
Let $a\in \mathbb{N}_+$ be such that
\begin{equation}\label{e:a}
a > \frac{\log(\frac{13+11\tau}{1-\tau})}{\log\sigma_1}.
\end{equation}
Then there exists $0<\eta<\frac{1}{4}$ such that if $(z,n) \in \Omega$, and $B_1, \cdots, B_N$ are $N$ subsets in $D_0$ with $\text{diam}(B_i) < \frac{(1-\tau)\delta}{2\sigma_2^{n+a}}$, there exists $(z', n+a)\in \Omega$ such that
$$\psi(z',n+a) \subset \psi(z,n),$$
and
$$\psi(z',n+a)\cap B_i=\emptyset$$
for at least $\lceil \eta N \rceil$ (the smallest integer $\geq \eta N$) of the sets $B_i, \ 1\leq i \leq N$. \end{lemma}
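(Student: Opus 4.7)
The plan is to pass to the time level $f^{n+a}$, where the $(n+a)$-atoms have clean geometry — each contains a ball of radius $\frac{(1-\tau)\delta}{2}$ and sits inside a ball of radius $(1+\tau)\delta$ — and the images $f^{n+a}(B_i)$ have diameter less than $\frac{(1-\tau)\delta}{2}$, comparable in scale. The atom $\psi(z', n+a) \subset \psi(z,n)$ avoiding many of the $B_i$ will be produced by a double-counting argument: show that $\psi(z,n)$ contains many $(n+a)$-atoms, show that each $B_i$ can intersect only a bounded number of them, and by averaging pick an atom missed by a positive fraction of the $B_i$. The hypothesis $\sigma_1^a > \frac{13+11\tau}{1-\tau}$ is calibrated precisely so that this fraction can be forced into $(0, \frac{1}{4})$.

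The first ingredient is an expansion estimate. Since $f^{-a}|_{W^u}$ is $\sigma_1^{-a}$-Lipschitz, one has $f^a(B^u(p,r)) \supset B^u(f^a(p), \sigma_1^a r)$, so
\begin{equation*}
f^{n+a}(\psi(z,n)) \supset B^u\bigl(f^{n+a}(x_n^i),\, R\bigr), \qquad R := \frac{\sigma_1^a (1-\tau)\delta}{2}.
\end{equation*}
Any $(n+a)$-atom $D_{n+a}^j$ whose $f^{n+a}$-center $f^{n+a}(x_{n+a}^j)$ lies in $B^u(f^{n+a}(x_n^i),\, R - (1+\tau)\delta)$ then has $f^{n+a}(D_{n+a}^j) \subset B^u(f^{n+a}(x_n^i), R)$, hence $D_{n+a}^j \subset \psi(z,n)$. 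A covering argument using that the $(n+a)$-atoms tile the relevant region (cleanest with the Voronoi tiling of Example \ref{exampletiling}, which tiles all of $W^u(x)$) and that each atom image has diameter at most $2(1+\tau)\delta$ yields
\begin{equation*}
M := \#\{D_{n+a}^j \subset \psi(z,n)\} \geq C_1\left(\frac{\sigma_1^a(1-\tau)}{2(1+\tau)} - 2\right)^u > C_1\left(\frac{9+7\tau}{2(1+\tau)}\right)^u,
\end{equation*}
where $C_1 > 0$ is close to $1$ by bounded distortion.

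Next I would bound $K$, the maximum number of those atoms that a single $B_i$ can meet. After applying $f^{n+a}$ the set $B_i$ has diameter less than $\frac{(1-\tau)\delta}{2}$; any atom meeting $f^{n+a}(B_i)$ has its center within distance $(1+\tau)\delta + \frac{(1-\tau)\delta}{2} = \frac{(3+\tau)\delta}{2}$ of a fixed reference point in $f^{n+a}(B_i)$. Since the atoms' inscribed balls of radius $\frac{(1-\tau)\delta}{2}$ are pairwise disjoint and all fit inside a common ball of radius $2\delta$, volume comparison gives
\begin{equation*}
K \leq C_2 \left(\frac{4}{1-\tau}\right)^u
\end{equation*}
uniformly in $a$, $n$, $i$.

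The proof concludes with a double count: the number of pairs $(D_{n+a}^j, B_i)$ with $D_{n+a}^j \subset \psi(z,n)$ and $D_{n+a}^j \cap B_i \neq \emptyset$ is at most $KN$, so some atom $\psi(z', n+a) := D_{n+a}^{j_0} \subset \psi(z,n)$ meets at most $KN/M$ of the $B_i$ and hence avoids at least $N(1 - K/M)$ of them. The hypothesis on $a$ is exactly what makes $K/M < 1$; setting $\eta := \min\{1 - K/M,\, \frac{1}{5}\}$ then produces an $\eta \in (0, \frac{1}{4})$ with the stated property. The main obstacle I anticipate is making the two volume inequalities rigorous on $W^u(x)$ via bounded distortion (Lemma \ref{BD}) and confirming that the ball at level $f^{n+a}$ really is covered by the $(n+a)$-atoms; the latter is automatic for the Voronoi tiling but in general may require the containment $\psi(z,n) \subset D_0$.
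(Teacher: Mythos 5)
Your proposal takes a genuinely different route from the paper, and it has a real gap in it. The paper's proof does \emph{not} count $(n+a)$-atoms and average; it exhibits just \emph{two} specific $(n+a)$-atoms inside $\psi(z,n)$, namely an atom $\psi(z_n^i,n+a)$ chosen near the boundary of $f^n(\psi(z,n))$ (as in the proof of (MSG0)) and the atom $D_{n+a}^i$ containing the ``center'' $x_n^i$, and then shows — this is where \eqref{e:a} enters — that the gap between their $f^n$-images exceeds $\mathrm{diam}(f^n(B_i))\le (1-\tau)\delta/(2\sigma_1^a)$, so no single $B_i$ can meet both. A pigeonhole argument between the two atoms then immediately gives $\eta$ close to $1/2$, cut down to $\eta<1/4$ for safety. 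The hypothesis $\sigma_1^a>(13+11\tau)/(1-\tau)$ is the exact cleanup of $(1-\tau)\delta/2 - 3\cdot 2(1+\tau)\delta/\sigma_1^a > (1-\tau)\delta/(2\sigma_1^a)$; nothing about it comes from a volume comparison.

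The gap in your approach: you assert that ``the hypothesis on $a$ is exactly what makes $K/M<1$,'' but you never verify it, and the numerology does not obviously close. With your own bounds, for $\tau\to 0$ and taking the most optimistic constants $C_1=C_2=1$, the requirement $M>K$ reduces to $(\sigma_1^a/2-2)^u>4^u$, i.e.\ $\sigma_1^a>12$. The hypothesis gives $\sigma_1^a>13$, so the margin is $4.5^u$ versus $4^u$ — a factor $(9/8)^u$ — which is then eaten into by whatever the actual values of $C_1$ (bounded distortion plus Riemannian volume comparison applied to the $f^{n+a}$-ball) and $C_2$ turn out to be. You would need to track these constants precisely, and possibly shrink $\delta$, to make the inequality $K<M$ follow from \eqref{e:a}; as written this is an unfilled hole, and the clean form of \eqref{e:a} is not a coincidence you can lean on. A second, smaller issue, which you flag but do not resolve, is that Definition \ref{tiling} only guarantees the $(n+a)$-atoms cover $D_0$, not all of $\psi(z,n)$ (an atom $\psi(z,n)$ can spill slightly outside $D_0$), so the lower bound on $M$ via a covering argument needs the extra hypothesis $\psi(z,n)\subset D_0$ or some patch. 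The paper's two-atom argument sidesteps both problems: it only needs one atom produced by (MSG0) and one adjacent to $x_n^i$, both of which exist under \eqref{e:a} with no volume estimates at all.
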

\begin{proof}
Consider the atom $f^n(\psi(z,n)) \subset W^u(f^n(x))$. By Definition \ref{tiling}
$$B^u(f^n(x_n^i), \frac{(1-\tau)\delta}{2}) \subset f^n(D_n^i) \subset B^u(f^n(x_n^i), (1+\tau)\delta)$$
for some $x_n^i\in W^u(x)$. Since any $f^n(\psi(z',n+a))$ intersecting with $\partial(f^n(\psi(z,n)))$ has diameter no more than $\frac{2(1+\tau)\delta}{\sigma_1^a}$, one can find a point $z_n^i \in T_{n+a}$ such that $\psi(z_n^i,n+a) \subset \psi(z,n)$ and
$$d(f^n(z_n^i), f^n(x_n^i)) \geq\frac{(1-\tau)\delta}{2}-\frac{2(1+\tau)\delta}{\sigma_1^a}.$$
Therefore,
\begin{equation}\label{e:distance}
d(f^n(\psi(z_n^i,n+a), f^n(D_{n+a}^i)) \geq \frac{(1-\tau)\delta}{2}-3\cdot\frac{2(1+\tau)\delta}{\sigma_1^a}
\end{equation}
where $x_n^i \in \overline{D_{n+a}^i}$. There are two different cases:
\begin{enumerate}
  \item $\psi(z_n^i,n+a)\cap B_i=\emptyset$ for at least $\lceil \eta N \rceil$ of $B_i, \ 1\leq i \leq N$. Then we are done by setting $z'=z_n^i$.
  \item $\psi(z_n^i,n+a)\cap B_i\neq \emptyset$ for at least $N-\lceil \eta N \rceil+1$ of $B_i, \ 1\leq i \leq N$. We claim that each of these $(N-\lceil \eta N \rceil+1)$ many $B_i$'s satisfies $B_i\cap D_{n+a}^i=\emptyset$. Indeed, any $f^n(B_i)$ can not intersect both $f^n(\psi(z_n^i,n+a)$ and $f^n(D_{n+a}^i)$ because
      \begin{equation*}
      \begin{aligned}
      \text{diam}(f^n(B_i)) &\leq \frac{(1-\tau)\delta}{2\sigma_2^{n+a}}\cdot \sigma_2^n \leq  \frac{(1-\tau)\delta}{2\sigma_1^{a}}\\
      &<\frac{(1-\tau)\delta}{2}-3\cdot\frac{2(1+\tau)\delta}{\sigma_1^a} \\
      &\leq d(f^n(\psi(z_n^i,n+a), f^n(D_{n+a}^i))
      \end{aligned}
      \end{equation*}
      by combining \eqref{e:a} and \eqref{e:distance}. Let us verify that
      \begin{equation}\label{e:count}
      N-\lceil \eta N \rceil +1 \geq \lceil \eta N \rceil.
       \end{equation}
    If $N\geq 2$, \eqref{e:count} holds when $0<\eta<\frac{1}{2}-\frac{1}{2N}$. If $N=1$, it holds for every $0<\eta<1$. So if $0<\eta<\frac{1}{4}$, \eqref{e:count} always holds. So we are done by setting $\psi(z',n+a)=D_{n+a}^i$.
\end{enumerate}
\end{proof}
\begin{remark}
One may have an upper bound for $\eta$ larger than $\frac{1}{4}$ from the above proof.
\end{remark}
\begin{remark}
By \eqref{e:a}, we must choose different values for $a$ under different partially hyperbolic diffeomorphisms.
\end{remark}

Let $W$ be a local manifold passing through $y$ transversally to the foliation $W^u$ ($\dim W=n-u$). We call the set
\begin{equation*}
\Pi(c) := \Pi(y, W, c):= \bigcup_{z\in B^W(y,c/2)} B^u(z, c/2)
\end{equation*}
an open $c$-rectangle at $y$ ($c$ is very small). Let $I_k=I_k(c)$ denote a connected component of $f^{-k}(\Pi(c)) \cap W^u(x)$ on $W^u(x)$, $k \geq 0$. Note that there may exist more than one connected component for a single $k$. We state Main Theorem \ref{main} more precisely as follows.
\begin{theorem}\label{winning}
Let $a$ be as in \eqref{e:a}. Then $E_x(f,y)$ is $a$-winning for modified Schmidt games induced by $f$ on $W^u(x)$ with respect to $\mathcal{T}$.
\end{theorem}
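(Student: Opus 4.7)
The plan is to exhibit a concrete $(a,b;E_x(f,y))$-winning strategy for Alice, for arbitrary $b>a_*$. The underlying idea is to force the limit point $z_\infty$ to avoid every preimage $f^{-k}(\Pi(c))$ for some small $c>0$ chosen after Bob's first move. Indeed, if $z_\infty\notin\bigcup_{k\geq 0}I_k(c)$, then $f^k(z_\infty)\notin\Pi(c)$ for all $k\geq 0$, so $\Pi(c)$ is a neighborhood of $y$ disjoint from the forward orbit, placing $z_\infty$ in $E_x(f,y)$.

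After Bob plays $\omega_1=(z_1,n_1)$, Alice fixes a small $c=c(\omega_1)>0$ and a transversal $W$, defining $\Pi(c)=\Pi(y,W,c)$ and its components $I_k=I_k(c)$. She then proceeds inductively. At step $j\geq 1$, given Bob's move $\psi(\omega_j)$, she compiles the enemy list $\mathcal{E}_j$ consisting of components $I_k$ that intersect $\psi(\omega_j)$, have not already been avoided by a previous atom, and satisfy $\mathrm{diam}(I_k)<\frac{(1-\tau)\delta}{2\sigma_2^{n_j+a}}$, the diameter hypothesis of Lemma \ref{avoid}. Since $\mathrm{diam}(I_k)\leq c/\sigma_1^k$, this holds precisely for $k$ above an explicit threshold $K_j^*$ linear in $n_j$. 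Alice then applies Lemma \ref{avoid} to $\mathcal{E}_j$ to select an $(n_j+a)$-atom $\psi(\omega_j')\subset\psi(\omega_j)$ disjoint from at least $\lceil\eta|\mathcal{E}_j|\rceil$ members of $\mathcal{E}_j$; those $I_k$'s are then permanently handled, since all future atoms lie inside $\psi(\omega_j')$.

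The crux is to verify that each $I_k(c)$ is handled in finitely many steps, so that $z_\infty$ avoids them all. Once an $I_k$ becomes handleable (i.e., $k>K_j^*$), it either intersects Bob's atom and joins $\mathcal{E}_j$, or it is already disjoint from it. Geometric reduction by the fixed factor $(1-\eta)$ from Lemma \ref{avoid} removes a positive proportion of $\mathcal{E}_j$ at every step, while the influx of new handleable enemies is controlled via volume estimates: atoms satisfy $\nu(\psi(\omega_j))\leq C\delta^u\sigma_1^{-n_j u}$ and each component satisfies $\nu(I_k)\geq C'c^u\sigma_2^{-ku}$, giving an explicit upper bound on the number of new members of $\mathcal{E}_j$ per step. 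Taking $c$ small enough keeps $|\mathcal{E}_j|$ uniformly bounded, so any fixed $I_k$ is removed within a bounded number of steps after first appearing.

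I expect the main obstacle to be the quantitative bookkeeping behind this balance between the removal rate $\eta$ and the arrival rate of new enemies. It is precisely the assumption \eqref{e:a} on $a$, which yields both the lower bound on $\eta$ and the diameter condition of Lemma \ref{avoid}, combined with the smallness of $c$, that makes the combinatorics close. Once this is settled, the nested sequence $\{\psi(\omega_j')\}$ converges by (MSG2) to a unique point $z_\infty$ lying outside every $I_k$, so $z_\infty\in E_x(f,y)$. Since $b>a_*$ was arbitrary, this shows $E_x(f,y)$ is $a$-winning.
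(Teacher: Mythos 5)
Your high-level idea (use Lemma~\ref{avoid} repeatedly to push the intersection point off every $I_k(c)$) matches the paper's, but the bookkeeping you propose does not close, for two independent reasons, and it also overlooks the mechanism the paper actually uses.

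First, the enemy list $\mathcal{E}_j$ as you define it is infinite, not ``uniformly bounded.'' The diameter condition $\mathrm{diam}(I_k)<\frac{(1-\tau)\delta}{2\sigma_2^{n_j+a}}$ is a \emph{lower} threshold on $k$: it is satisfied by all $k>K_j^*$, and in particular by all sufficiently large $k$. Since $\psi(\omega_j)$ is an open set and, for growing $k$, the components $I_k$ become small and ever more numerous throughout $\psi(\omega_j)$, there are infinitely many $(k,I_k)$ in $\mathcal{E}_j$. Shrinking $c$ does not help: it only lowers $K_j^*$, admitting \emph{more} $k$'s, not fewer. The orientation of your induction is in fact reversed: since $K_j^*$ increases with $n_j$, a fixed $I_k$ does not ``become handleable'' as $j$ grows; rather it is handleable from the outset (when $c$ is small enough one has $K_1^*<0$) and \emph{ceases} to be handleable once $K_j^*\geq k$, i.e., there is a hard deadline by which Alice must already have detached from $I_k$. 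Your ``influx of new handleable enemies'' picture therefore does not describe the situation.

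Second, even granting a finite list, Lemma~\ref{avoid} only guarantees disjointness from \emph{some} $\lceil\eta N\rceil$ of the sets, chosen by the lemma and not by Alice. With continual arrivals (or an infinite list), a specific $I_k$ with an approaching deadline might be among the $(1-\eta)N$ not removed, turn after turn, and the argument gives no control over this. Bounded list size plus a removal fraction does not yield removal of a named element. Also, your proposed volume count $\nu(\psi(\omega_j))/\nu(I_k)\lesssim (\delta/c)^u(\sigma_2^k/\sigma_1^{n_j})^u$ is not uniformly bounded over the handleable range $k>K_j^*$ in the genuinely non-conformal case $\sigma_2>\sigma_1$ (since $K_j^*$ has slope $\log\sigma_2/\log\sigma_1>1$ in $n_j$), so even a ``per-$k$'' count from volume alone grows without bound.

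The paper avoids all of this with two ideas you do not use. (i) It batches the game into blocks of $r$ turns (with $r(a+b)(1-\eta)^r<1$) and, in block $j$, attacks \emph{only} the $I_k$ with $jr(a+b)\leq k<(j+1)r(a+b)$; combined with the choice of $n_1$ and an auxiliary scale $L$ making $f^k(\psi(\omega_{jr+1}))\cap\Pi(c)$ connected, there is at most \emph{one} $I_k$ per such $k$, hence at most $r(a+b)$ enemies in the block — a fixed finite set killed to zero in $r$ turns before any new ones are considered. This is what lets the ``remove a fraction'' lemma actually produce complete removal. (ii) To verify the diameter hypothesis of Lemma~\ref{avoid} at every block with a single $c$, the paper applies $f^{jr(a+b)}$ at block $j$, renormalizing $\psi(\omega_{jr+1})$ to an $n_1$-atom and the relevant $I_k$'s to sets of diameter $\leq c$ in $W^u(f^{jr(a+b)}(x))$; this reduction to the $j=0$ picture is what makes one choice of $c$ (depending only on $n_1$, $r$, $a$) suffice uniformly. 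Your proof contains neither the block structure and one-$I_k$-per-$k$ reduction nor the renormalization step, and without them the estimate does not close.
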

\begin{proof}
Let $a$ be as in \eqref{e:a} and let $b>a_*$ be arbitrary. Let $r\in \mathbb{N}_+$ be large enough with
$$(1-\eta)^r(a+b)r<1$$
where $\eta$ is from Lemma \ref{avoid}. Fix $L>0$ to be very small. Regardless of the initial moves of Bob, Alice can make arbitrary moves waiting until Bob chooses an atom $\psi(z_1, n_1)$ with $n_1$ large enough such that
\begin{equation}\label{e:n1}
\frac{2(1+\tau)\delta}{\sigma_1^{n_1-(a+b)r}}\leq \frac{L}{100}.
\end{equation}
Let $c>0$ be small enough such that
\begin{enumerate}
  \item $c \leq \frac{(1-\tau)\delta}{2\sigma_2^{n_1+(a+b)r+a}}$,
  \item for any $z\in \Pi(c)$, $B^u(z,L)\cap \Pi(c)$ has only one connected component which contains $z$.
\end{enumerate}

Now we describe a strategy for Alice to win the $(a, b)$-modified Schmidt games on $W^u(x)$ with target set $S=E_x(f, y)$. We claim that for each $j\in \mathbb{N}$, Alice can ensure that for any $x\in \psi(\omega'_{r(j+1)})$ and any $I_k=I_k(c)$ with $k< (j+1)r(a+b)$, she has $x \notin I_k$. This will imply $\cap_i\psi(\omega'_i) \subset (\bigcup_{k} I_k)^c \subset E_x(f, y)$ and finish the proof.

Let us prove the claim by induction on $j$. Note that each step consists of $r$ turns of play. Consider $j=0$. Suppose that Bob has chosen an atom $\psi(z_1,n_1)$ as above. So Alice needs to avoid all $I_k$'s with $0\leq k< r(a+b)$ in the next $r$ turns of play. For a given $0\leq k< r(a+b)$, we have
\begin{equation*}
\begin{aligned}
\text{diam}(f^k(\psi(z_1,n_1))) &\leq \text{diam}(f^{-(n_1-k)}(f^{n_1}(\psi(z_1,n_1))))\\
&\leq \frac{2(1+\tau)\delta}{\sigma_1^{n_1-k}} <\frac{2(1+\tau)\delta}{\sigma_1^{n_1-(a+b)r}}\leq \frac{L}{100}
\end{aligned}
\end{equation*}
by \eqref{e:n1}. This implies that $f^k(\psi(z_1,n_1)) \cap \Pi(c)$ has at most one connected component by the choice of $c$ and $L$. In other words, for each $0\leq k< r(a+b)$ there is at most one $I_k$ intersecting with $\psi(z_1,n_1)$. So there are at most $r(a+b)$ many $I_k$'s with different $k$'s intersecting with $\psi(z_1,n_1)$. For each of them one has
$$\text{diam}(I_k) \leq \frac{c}{\sigma_1^k} \leq c \leq \frac{(1-\tau)\delta}{2\sigma_2^{n_1+(a+b)r+a}}$$
by the choice of $c$. This guarantees that Alice can apply Lemma \ref{avoid} in each of the next $r$ turns of play to avoid some of these $I_k$'s. Since $r(a+b)(1-\eta)^r<1$, after $r$ turns of play Alice can avoid all these $I_k$'s. So the claim is true when $j=0$.

Assume the claim is true for $0, 1, \cdots, j-1$. Now we consider the $j$th step. Suppose that Bob already picked $\psi(\omega_{jr+1})$. In this step Alice only needs to avoid the $I_k$'s satisfying
\begin{equation}\label{e:ks}
jr(a+b) \leq k < (j+1)r(a+b)
\end{equation}
and
\begin{equation} \label{e:intersect}
I_k \cap \psi(\omega_{jr+1})  \neq \emptyset.
\end{equation}
For a given $k$ with \eqref{e:ks},
\begin{equation*}
\begin{aligned}
\text{diam}(f^k(\psi(\omega_{jr+1}))) &= \text{diam}(f^k(D_{n_1+jr(a+b)}^i))\\
&\leq \frac{2(1+\tau)\delta}{\sigma_1^{n_1+jr(a+b)-k}} \\
&\leq\frac{2(1+\tau)\delta}{\sigma_1^{n_1-(a+b)r}}\\
&\leq \frac{L}{100}
\end{aligned}
\end{equation*}
by \eqref{e:n1} and \eqref{e:ks}. This implies that $f^k(\psi(\omega_{jr+1})) \cap \Pi(c)$ has at most one connected component by the choice of $c$ and $L$. So there are at most $r(a+b)$ many $I_k$'s satisfying \eqref{e:ks} and \eqref{e:intersect}. We have at most  $r(a+b)$ many $f^{jr(a+b)}(I_k)$'s intersecting $f^{jr(a+b)}(\psi(\omega_{jr+1}))$ by applying $f^{jr(a+b)}$. As $0\leq k-jr(a+b)\leq r(a+b)$ and $f^{jr(a+b)}(\psi(\omega_{jr+1}))$ is an $n_1$-atom, we can use the same argument as in the case $j=0$ for Alice to get a choice in the picture of $f^{jr(a+b)}(\psi(\omega_{jr+1}))$ first, and then applying $f^{-jr(a+b)}$ back to get a choice in the picture of $\psi(\omega_{jr+1})$ in each turn of play. After $r$ turns of play Alice can avoid all these $I_k$'s satisfying \eqref{e:ks} and \eqref{e:intersect}. So the claim is true and the theorem follows.
\end{proof}

\begin{proof}[Proof of Corollary \ref{corollary}]
By Theorem \ref{winning} and (1) of Proposition \ref{winproperty}, the winning set $E_{x}(f, y)$ is dense in $W^u(x)$. By Proposition \ref{schmidtgame}, one has $\dim_H(E_{x}(f, y) \cap U) = u$.
\end{proof}

\begin{proof}[Proof of Corollary \ref{corollary2}]
By Theorem \ref{winning} and (2) of Proposition \ref{winproperty}, $E_x(f, Y)$ is an $a$-winning set. Then the proof of Corollary \ref{corollary} also applies to $E_x(f, Y)$.
\end{proof}

\section{Full Hausdorff dimension on the manifold}
\subsection{Measurability of $\{\mu_x\}$}
 In this section we prove Main Theorem \ref{main2}. We note that we can not apply Marstrand Slicing Theorem (cf. \cite{KM}) directly to get the estimate $\dim_H(E(f,y)) \geq \dim_H(W) + \min_{x\in W}\dim_H(E_x(f,y))$, as the unstable foliation is only absolutely continuous but not Lipschitz continuous (cf. \cite{BP}, \cite{PS}, \cite{BW}). We follow the technique in \cite{Wu} to construct a Borel measure $\mu$ on $E(f,y)$ and apply the following easy half of Frostman's Lemma (cf. \cite[p.60]{F}).

\begin{lemma} \label{frostman}
Let $F$ be a Borel subset of a Riemannian manifold $X$. Let $\mu$ be a Borel measure on $F$, and let $0 <c< \infty$ be a constant. If
\begin{equation*}
\limsup_{r \to 0} \frac{\mu(B(x, r))}{r^h} < c \ \text{\ for all\ } x\in F,
\end{equation*}
then $\dim_H F \geq h$.
\end{lemma}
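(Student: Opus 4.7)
The plan is to prove $\dim_H F \geq h$ by showing that the $h$-dimensional Hausdorff measure $\mathcal{H}^h(F)$ is positive, via the standard mass distribution principle: every efficient cover of $F$ must absorb enough $\mu$-mass, which forces $\sum_i (\mathrm{diam}\,U_i)^h$ to be bounded below. The hypothesis is a $\limsup$ at each point, so the density bound $\mu(B(x,r)) < c\, r^h$ only holds for $r$ below a threshold depending on $x$; the first task is therefore to pass to a uniform version.

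For each $n \in \mathbb{N}$ I would define
\begin{equation*}
F_n := \{x \in F : \mu(B(x,r)) \leq c\, r^h \text{ for all } 0 < r < 1/n\}.
\end{equation*}
The $\limsup$ hypothesis gives $F = \bigcup_n F_n$, so by countable stability of Hausdorff dimension it suffices to show $\dim_H F_n \geq h$ for some $n$ with $\mu(F_n) > 0$; such an $n$ exists because $\mu$ is a nontrivial measure on $F$.

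Fix such an $n$ and take any countable cover $\{U_i\}$ of $F_n$ with $\mathrm{diam}(U_i) < 1/(2n)$. For each $i$ with $U_i \cap F_n \neq \emptyset$, pick $x_i \in U_i \cap F_n$; then $U_i \subset B(x_i, \mathrm{diam}(U_i))$ and $\mathrm{diam}(U_i) < 1/n$, so the defining property of $F_n$ yields $\mu(U_i) \leq c\,(\mathrm{diam}(U_i))^h$. Summing gives
\begin{equation*}
\mu(F_n) \leq \sum_i \mu(U_i) \leq c \sum_i (\mathrm{diam}(U_i))^h.
\end{equation*}
Taking the infimum over such covers shows $\mathcal{H}^h_{1/(2n)}(F_n) \geq \mu(F_n)/c$, and replacing $n$ by arbitrarily large $n' \geq n$ (which only enlarges the class $F_{n'} \supset F_n$ inside which the same inequality holds) lets the gauge tend to $0$, giving $\mathcal{H}^h(F_n) \geq \mu(F_n)/c > 0$. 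Hence $\dim_H F_n \geq h$, and therefore $\dim_H F \geq h$.

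The only real subtlety, and the main obstacle, is the Borel measurability of $F_n$, needed for countable stability to apply. I would handle this by noting that for closed balls $\overline{B}(x,r)$ the map $x \mapsto \mu(\overline{B}(x,r))$ is upper semicontinuous (by Fatou), while for open balls it is lower semicontinuous; either way, after passing to rational radii one writes $F_n$ as a countable intersection of Borel sets. With this in place, the covering argument above is routine and the rest is bookkeeping.
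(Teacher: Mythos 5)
Your proof is correct; it is the standard mass-distribution-principle argument, which is precisely what the paper silently invokes by citing Falconer rather than proving Lemma~\ref{frostman}. Two small points worth tightening: from $x_i\in U_i$ you only get $U_i\subset \overline B(x_i,\mathrm{diam}\,U_i)$, so pass to $B(x_i,2\,\mathrm{diam}\,U_i)$ and absorb the extra factor $2^h$ into the constant, and (as you already flag) the conclusion needs $\mu(F)>0$ --- an assumption the lemma leaves implicit but which is satisfied in the paper's application; the measurability of the sets $F_n$ is handled exactly as you indicate, via lower semicontinuity of $x\mapsto\mu(B(x,r))$ and a restriction to rational radii (using left-continuity of $r\mapsto\mu(B(x,r))$).
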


Recall that $E^u\subset TM$ is the $u$-dimensional unstable distribution. Suppose that $D_0 \subset W^u(x)$ has diameter with respect to $d^u$ (and hence the diameter with respect to $d$) less than the injectivity radius of $M$. Let $E^{\bot}$ be the $(n-u)$-dimensional orthogonal complement of $E^u$. For any $z\in D_0$ define $W_\delta(z):=\exp_z(E^{\bot}_\delta(z))$, where $E^{\bot}_\delta(z)$ is the $\delta$-ball centered at origin in the subspace $E^{\bot}(z)$. It is clear that the restriction of $E^u$ to $D_0 \subset W^u(x)$ is smooth and hence $E^{\bot}(z)$ is smooth on $D_0$. Suppose that $\delta$ is less than the injectivity radius of $M$, then $\{W_\delta(z): z\in D_0\}$ form a local smooth foliation. Such a local smooth foliation can be constructed in a small neighborhood of any point in $M$ and it is transverse to $W^u$.

Fix $x_0 \in M$. Suppose $W$ is a local smooth foliation transverse to $W^u$ near $x_0$ which can be constructed as above. Recall that a $\delta$-rectangle at $x_0$ is defined as
\begin{equation*}
\Pi(x_0) := \Pi(x_0, W, \delta):= \bigcup_{x\in \overline{B^W(x_0,\delta/2)}} B^u(x, \delta/2).
\end{equation*}
Recall that we have much freedom to choose the local tilings $\mathcal{T}(x)$ on each local unstable manifold $B^u(x,\delta/2)$. We can specify a choice of $\mathcal{T}(x)$ such that the atoms vary measurably as $x$ varies in the following sense:
\begin{proposition}\label{tilingchoice}
There exist a sequence of finite partitions $\mathcal{Q}^{(n)}$ of $\overline{B^W(x_0, \delta/2)}$ and a family of $f$-induced $(D_0(x), \delta, \tau)$-local tilings on $W^u(x)$ (denoted as $\mathcal{T}(x)$) for each $x\in \overline{B^W(x_0,\delta/2)}$ such that:
\begin{enumerate}
  \item for each element $Q_i^{(n)} \in \mathcal{Q}^{(n)}$, the interior of $Q_i^{(n)}$, denoted as $\text{Int}(Q_i^{(n)})$, is an open and connected set, and $\nu(\partial Q_i^{(n)})=0$;
  \item $\mathcal{Q}^{(n)} \leq \mathcal{Q}^{(n+1)}$;
  \item for each $Q_i^{(n)}\in \mathcal{Q}^{(n)}$, there exists $x_i^n\in \text{Int}(Q_i^{(n)})$ such that for any $x\in \text{Int}(Q_i^{(n)})$ one has
  $$f^n(\overline{D_n^i(x)})=h_{x_i^n,x}(f^n(\overline{D_n^i(x_i^n)}))$$
  where $D_n^i(x)$ is an arbitrary $n$-atom in $\mathcal{T}(x)$ and $h_{x_i^n,x}$ is a holonomy map of some local smooth foliation.
\end{enumerate}
\end{proposition}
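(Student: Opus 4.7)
The plan is to construct, essentially independently at each level $n$, a finite open cover of $\overline{B^W(x_0,\delta/2)}$ with distinguished reference points, and then transport the reference tiling $\mathcal{T}_n(x_i^n)$ built at each reference point via a holonomy of a local smooth transverse foliation. The partitions $\mathcal{Q}^{(n)}$ are then obtained by taking common refinements of these covers up to level $n$.

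At every point $z \in M$ I have at my disposal the local smooth foliation $\{W_\delta(w)\}_{w \in B^u(z, \delta/2)}$ transverse to $W^u$ defined via $\exp_w$ applied to $E^\perp(w)$ in the paragraph preceding the proposition. For two local unstable manifolds sufficiently close in the transverse direction, the corresponding holonomy $h_{z, z'}: W^u(z) \to W^u(z')$ along this smooth foliation is a well-defined bi-H\"{o}lder homeomorphism that converges to the identity in the $C^0$ topology as $z' \to z$; I use the same symbol $h_{z, z'}$ for the analogous holonomy at level $n$ between $f^n(W^u(z))$ and $f^n(W^u(z'))$. Fix an auxiliary parameter $\tau_0 \in (0, \tau)$. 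At each reference point $z$ and level $n$ I build the reference tiling on $f^n(W^u(z))$ via Example \ref{exampletiling} with $\tau_0$ in place of $\tau$, then transport each atom $f^n(\overline{D_n^i(z)})$ by $h_{z, x}$ to define $f^n(\overline{D_n^i(x)})$ on $f^n(W^u(x))$. For the resulting family to satisfy the $(1\pm\tau)\delta$ bounds of Definition \ref{tiling}, I need $x$ close enough to $z$ in the base that the holonomy has $C^0$ displacement at most $(\tau - \tau_0)\delta/2$; since the reference tiling is finite, compactness and the $C^0$-continuity of holonomies in their base-point produce a finite open cover $\{U_j^n\}$ of $\overline{B^W(x_0,\delta/2)}$ with distinguished reference points $z_j^n \in U_j^n$ meeting this requirement. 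The remaining conditions (2) and (3) of Definition \ref{tiling} are preserved because $h_{z, x}$ is a homeomorphism.

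I define $\mathcal{Q}^{(n)}$ inductively: take $\mathcal{Q}^{(0)}$ to be a partition subordinate to $\{U_j^0\}$, and let $\mathcal{Q}^{(n+1)}$ be the common refinement of $\mathcal{Q}^{(n)}$ with a partition subordinate to $\{U_j^{n+1}\}$. Performing the cuts along finitely many piecewise-smooth hypersurfaces transverse to $W$, each element of $\mathcal{Q}^{(n)}$ can be arranged to have open connected interior and $\nu$-null boundary, giving property (1). Property (2) is built into the definition, and property (3) holds by the holonomy-based definition of the atoms inside each $Q_i^{(n)}$.

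The main obstacle I foresee is the $n$-dependence of the cover: as $n$ grows, the images $f^n(z)$ and $f^n(x)$ may drift apart in the transverse (center-stable) direction, so the neighborhood in the base within which the holonomy-transported tiling at level $n$ stays within the tolerance of Definition \ref{tiling} may shrink with $n$. This forces $\mathcal{Q}^{(n)}$ to genuinely refine as $n$ grows; at each fixed $n$ a suitable cover nevertheless exists by compactness of $\overline{B^W(x_0,\delta/2)}$ and the $C^0$-continuity of the holonomy in its base-point, and the inductive construction handles the consistency across levels cleanly.
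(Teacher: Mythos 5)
Your proof is correct and follows essentially the same route as the paper: at each level build a reference tiling (e.g.\ via Example \ref{exampletiling}) at finitely many sample points, transport it to nearby unstable leaves via the holonomy of the smooth transverse foliation $\exp(E^\perp)$, invoke compactness of $\overline{B^W(x_0,\delta/2)}$ together with $C^0$-continuity of the holonomy in its base point to get a finite cover/partition at each level, and take common refinements to obtain the nested $\mathcal{Q}^{(n)}$. Your explicit $\tau_0<\tau$ margin is simply a cleaner quantification of what the paper phrases as ``$\delta$ (and $\tau$) small enough,'' and your use of a single holonomy per level rather than the paper's per-atom holonomies $h^i_{n,x,x'}$ is equivalent, since the latter all come from the same $\exp(E^\perp)$ foliation and agree on atom boundaries (as the paper itself remarks). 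One small note: the holonomy here is actually smooth, not merely bi-H\"older, since the transverse foliation is smooth; this does not affect the present argument but is used later in Proposition \ref{measurablechoice}.
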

\begin{proof}
At first consider $n=0$. At $x_0$, we let $D_0(x_0):=B^u(x_0, \delta/2)$.  We have a local smooth foliation $W_\delta(z), z\in D_0(x)$ perpendicular to $D_0(x)$ as constructed above. For any $x\in \overline{B^W(x_0, \delta/2)}$, let $h^W_{x_0x}$ be the holonomy map of the smooth foliation from $B^u(x_0, \delta/2)$ to $B^u(x, \delta/2)$. Set $D_0(x):=h^W_{x_0x}(D_0(x_0))$. If $\delta$ is small enough, there exists $\tau>0$ such that $B^u(x, \frac{(1-\tau)\delta}{2}) \subset D_0(x) \subset B^u(x, (1+\tau)\delta)$ for any $x\in \overline{B^W(x_0, \delta/2)}$ since $h^W_{x_0x}$ is smooth. We set $\mathcal{Q}^{(0)}:=\{\overline{B^W(x_0, \delta/2)}\}$ and $\mathcal{T}_0(x):=D_0(x)$.

Suppose the conclusion is true for $0, \cdots, n-1$, and now we prove it for $n$. For each $Q_i^{(n-1)}\in \mathcal{Q}^{(n-1)}$ let $x$ be an arbitrary point in $\overline{Q_i^{(n-1)}}$. We choose a local tiling $\mathcal{T}_n(x)=\{D_n^i(x), i=1, \cdots, k_n(x)\}$ (for example, the one constructed by a maximal separated set as in Example \ref{exampletiling}). For each $D_n^i(x)$ consider $f^n(D_n^i(x))$ and construct a local smooth foliation as in case $n=0$ above. Thus, there exists a small enough $\delta_n^i(x)>0$ such that for any $x' \in \overline{B^W(x, \delta_n^i(x))}$ we have $f^n(\overline{D_n^i(x')})=h_{n,x,x'}^i(f^n(\overline{D_n^i(x)}))$ where $h_{n,x,x'}^i$ is the holonomy map of the local smooth foliation. Set $\delta_n(x):=\min_{1\leq i \leq k_n(x)}\delta_n^i(x)$. It is also worth noting that $h_{n,x,x'}^i$ of different $i$ coincide on the boundary of atoms for any $x'\in \overline{B^W(x, \delta_n(x))}$ because we constructed all local smooth foliations by using exponential maps and complement of $E^u$. Now we have an open cover $\{B^W(x, \delta_n(x)): x\in \overline{Q_i^{(n-1)}}\}$ of the compact space $\overline{Q_i^{(n-1)}}$. Thus we have a finite open cover from which we can obtain a finite partition $\mathcal{R}_i^{(n-1)}$ of $\overline{Q_i^{(n-1)}}$. We collect all the elements of $\mathcal{R}_i^{(n-1)}$ and obtain a finite partition of $\overline{B^W(x_0, \delta/2)}$. Set $\mathcal{Q}^{(n)}$ to be this partition and one can check (1) and (2) in the proposition easily. We have also constructed $\mathcal{T}_n(x)$ above for any $x\in \text{Int}(Q_i^{(n)})$ satisfying (3) in the proposition. Continue the induction and the proposition follows.
\end{proof}

The measure $\mu$ will be constructed to be supported on $\Pi(x_0)\cap E(f,y)$. At the first stage we construct the "conditional" measures on local unstable manifolds. Combining Proposition \ref{schmidtgame} and Theorem \ref{winning}, we can construct a family of measures $\{\mu_x\}$:
\begin{proposition} \label{familymeasure}
For any $\epsilon >0$, any $x\in \overline{B^W(x_0,\delta/2)}$, there exists a sequence of measures $\{\mu_x^{(l)}\}_{l=0}^{\infty}$ and $\{\mu_x\}$ such that:
\begin{enumerate}
  \item $\mu_x^{(l)}\text{\ is supported on\ } B^u(x, \delta/2), \text{\ and\ }\mu_x \text{\ is supported on\ } B^u(x, \delta/2) \cap E_x(f,y);$
  \item $\mu_x \text{\ is the unique weak limit of\ } \mu_x^{(l)};$
  \item for any $z\in B^u(x, \delta/2)\cap \text{supp}(\mu_x)$, $r>0$ small enough,
\begin{equation*}
\mu_x (B^u(z, r)) \leq Cr^{u-\epsilon}.
\end{equation*}
\end{enumerate}
\end{proposition}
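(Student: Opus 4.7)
The plan is to derive Proposition \ref{familymeasure} as an essentially direct application of Proposition \ref{schmidtgame} combined with Theorem \ref{winning}, applied to the family of tilings $\mathcal{T}(x)$ from Proposition \ref{tilingchoice}. For each fixed $x \in \overline{B^W(x_0, \delta/2)}$, Theorem \ref{winning} asserts that $E_x(f,y)$ is $a$-winning for the modified Schmidt games induced by $f$ on $W^u(x)$ with respect to $\mathcal{T}(x)$ whenever $a$ satisfies \eqref{e:a}, so Proposition \ref{schmidtgame} applies with target set $S = E_x(f,y)$.

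Given $\epsilon > 0$, first fix such an $a$ and then choose $b > a_*$ large enough that $\frac{\log(1/c(a))}{\sigma(a+b)} < \epsilon$, where $c(a)$ comes from property $(\nu 2)$ and $\sigma$ from (MSG2). To force the supports into $B^u(x, \delta/2)$ rather than the slightly larger set $D_0(x)$, I use (MSG1) to have Bob open with an atom $\psi(\omega_1) \subset B^u(x, \delta/2)$; this is possible because $B^u(x, (1-\tau)\delta/2) \subset D_0(x)$ is a nonempty open set. Running Alice's winning strategy against every admissible continuation of Bob's play from $\psi(\omega_1)$ produces, exactly as in the proof of Proposition \ref{schmidtgame}, a strongly tree-like family $\mathcal{A} = \bigcup_l \mathcal{A}_l$ with $A_0 = \psi(\omega_1') \subset B^u(x, \delta/2)$ whose limit set $\mathbf{A}_\infty$ is contained in $E_x(f,y) \cap B^u(x, \delta/2)$.

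Applying Lemma \ref{treelike} with $m = \nu$ and exponent $k = u$ then yields the sequence $\mu_x^{(l)}$ supported on $\mathbf{A}_l$ converging weakly to a unique limit $\mu_x$ supported on $\mathbf{A}_\infty$, establishing (1) and (2). For (3), the same computation as in the proof of Proposition \ref{schmidtgame} gives
\begin{equation*}
\mu_x(B^u(z, r)) \leq C\, r^{u - \epsilon'} \qquad \text{for } \epsilon' := \frac{\log(1/c(a))}{\sigma(a+b)} < \epsilon.
\end{equation*}

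Since this proposition is essentially a specialization of earlier results to the particular tilings $\mathcal{T}(x)$ of Proposition \ref{tilingchoice}, there is no serious analytical difficulty; the main technical point to verify is that the constants $C$ and $c(a)$ can be taken uniformly over $x \in \overline{B^W(x_0, \delta/2)}$, so that the bound in (3) does not degenerate as $x$ varies. This uniformity is built into Proposition \ref{tilingchoice}, whose tilings share common geometric parameters $(\delta, \tau)$ and are related by holonomies of a fixed smooth transverse foliation, giving uniform bounded-distortion constants via Lemma \ref{BD}. The genuinely hard work will be done in the next subsection, where the $x$-dependence of the strategy and tree construction must be made Borel measurable so that $\{\mu_x\}$ can be integrated against the transverse volume to produce a global measure on $\Pi(x_0) \cap E(f,y)$.
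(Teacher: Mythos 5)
Your proposal is correct and follows exactly the route the paper intends: the paper gives no explicit proof for this proposition, only the remark ``Combining Proposition \ref{schmidtgame} and Theorem \ref{winning}, we can construct a family of measures,'' and your argument supplies precisely that combination. The extra care you take---using (MSG1) to have Bob open inside $B^u(x,\delta/2)$ so the support lands in the stated ball rather than in the slightly larger $D_0(x)$, and flagging the uniformity of $C$ and $c(a)$ over $x$ as the only point needing verification---are correct refinements that the paper leaves implicit.
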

\begin{remark}
Since the construction of $\mu_x^{(l)}$ and $\mu_x$ in Proposition \ref{familymeasure} relies on the construction of $\mathcal{A}_{x}^{(l)}$, they are not unique.
\end{remark}

Now we shall show that there exist choices of each $\mu_x^{(l)}$ and $\mu_x$ such that $x\mapsto \mu_x^{(l)}$ and $x\mapsto \mu_x$ are measurable with respect to the volume measure $\nu$ on $W$. So we need to specify the choice of the families $\mathcal{A}_x^{(l)}$ for any $l$ and any $x\in \overline{B^W(x_0, \delta/2)}$.

\begin{proposition}\label{measurablechoice}
There exist a sequence of finite partitions $\mathcal{P}^{(l)}$ of $\overline{B^W(x_0, \delta/2)}$, a family $\mathcal{A}_x=\{\mathcal{A}_{x}^{(l)}\}_{l=0}^{\infty}$ of collections of subsets of $B^u(x, \delta/2)$ and a family of measures $\{\mu_x^{(l)}\}$ such that:
\begin{enumerate}
  \item for each element $P_i^{(l)} \in \mathcal{P}^{(l)}$, the interior of $P_i^{(l)}$, denoted as $\text{Int}(P_i^{(l)})$, is an open and connected set, and $\nu(\partial P_i^{(l)})=0$;
  \item $\mathcal{P}^{(l)} \leq \mathcal{P}^{(l+1)}$;
  \item for each $x\in \overline{B^W(x_0, \delta/2)}$, each $l \in \mathbb{N}_0$, $\mu_x^{(l)}$ is supported on $\mathbf{A}_x^{(l)}$ and obtained as in Proposition \ref{familymeasure}. Moreover, $\bigcup_{x\in \text{Int}(P_i^{(l)})} \mathbf{A}_x^{(l)}$ is a union of finitely many open and connected sets.
  \item For any $l\in \mathbb{N}_0$, $x \mapsto \mu_x^{(l)}$ is continuous on each $\text{Int}(P_i^{(l)})$; hence it is measurable with respect to the volume measure $\nu$ on $\overline{B^W(x_0, \delta/2)}$.
\end{enumerate}
\end{proposition}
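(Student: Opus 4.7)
The plan is to construct the partitions $\mathcal{P}^{(l)}$ and the tree-like families $\mathcal{A}_x^{(l)}$ by induction on $l$, piggy-backing on the ``measurable'' tilings provided by Proposition \ref{tilingchoice}. The key observation is that within a single cell of $\mathcal{Q}^{(n)}$ the tiling $\mathcal{T}_n(x)$ is transported from a base point by the smooth holonomies of property (3) of that proposition, so the atoms themselves move rigidly across the cell. What can genuinely depend on $x$ inside a cell is not the labels of the atoms but which of them Alice must pick when she runs the winning strategy of Theorem \ref{winning}: namely the combinatorial pattern of intersections between atoms and the forbidden sets $I_k(x)$.

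Using the parameters $a,b,r$ and the initial index $n_1$ from the proof of Theorem \ref{winning}, I would set $n_l := n_1 + a + l(a+b)$. At stage $l=0$, take $\mathcal{P}^{(0)}$ to be the refinement of $\mathcal{Q}^{(n_1+a)}$ obtained by further partitioning each cell according to Bob's initial move (a $\mathcal{T}_{n_1}$-atom inside a fixed base region) and Alice's winning response (a $\mathcal{T}_{n_1+a}$-atom avoiding the finitely many $I_k$ that the strategy must avoid). For the inductive step, assume $\mathcal{P}^{(l-1)}$ is constructed. Inside each cell, and for each Bob--Alice history consistent with the previous stages, enumerate the finitely many continuation moves Bob could make into $(n_l-a)$-subatoms of an existing element of $\mathcal{A}_x^{(l-1)}$, and for each one list the $I_k$ with $(l-1)r(a+b) \le k < lr(a+b)$ that meet it. Both lists are finite, and their members move continuously in $x$ (atoms by holonomy, $I_k(x)$ as a connected component of the fixed set $f^{-k}(\Pi(c))$ intersected with $W^u(x)$), so we may refine $\mathcal{P}^{(l-1)}$ so that the combinatorial intersection pattern is constant on each new cell. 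Alice's strategy then prescribes a common labelled atom on the whole cell, and we let $\mathcal{A}_x^{(l)}$ be the collection of those atoms over all consistent histories.

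With $\mathcal{A}_x^{(l)}$ in place, define $\mu_x^{(l)}$ exactly as in the tree-like construction underlying Lemma \ref{treelike}, namely as the convex combination, with weights inherited from the tree, of the normalized volume measures $\nu|_A / \nu(A)$ for $A \in \mathcal{A}_x^{(l)}$. Within $\text{Int}(P_i^{(l)})$ each $A \in \mathcal{A}_x^{(l)}$ is the image under smooth holonomies of a fixed atom at a distinguished base point, so the one-parameter family $\{A(x): x \in \text{Int}(P_i^{(l)})\}$ sweeps out a single open connected ``tube''; consequently $\bigcup_{x \in \text{Int}(P_i^{(l)})} \mathbf{A}_x^{(l)}$ is a finite union of such tubes, giving the second half of property (3). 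Continuous dependence of each atom and of its volume on $x$ forces $\mu_x^{(l)}$ to vary weak-$*$ continuously on each $\text{Int}(P_i^{(l)})$, and because the cell boundaries are $\nu$-null the global map $x \mapsto \mu_x^{(l)}$ is Borel measurable with respect to $\nu$ on $\overline{B^W(x_0, \delta/2)}$.

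The main obstacle I anticipate is controlling the intersection relations ``$I_k(x) \cap A(x) \neq \emptyset$'' as $x$ varies, since a priori a tangential contact between $I_k(x)$ and an atom boundary could spawn cell boundaries of positive $\nu$-measure or could force the refinement to be infinite. My plan to overcome this is to exploit the flexibility in the parameters $\delta$ and $\tau$ in Definition \ref{tiling}, together with a small perturbation of the transverse disk $W$ defining $\Pi(c)$, so as to make all the finitely many boundary loci encountered through stage $l$ transverse; this keeps each $\mathcal{P}^{(l)}$ finite and makes each cell's boundary piecewise smooth and hence $\nu$-null. The rest of the argument---continuity of $x \mapsto \mu_x^{(l)}$ and the tube structure of the limit sets---is then a routine consequence of Proposition \ref{tilingchoice} and Lemma \ref{treelike}.
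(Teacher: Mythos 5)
Your proposal follows essentially the same route as the paper: both build $\mathcal{P}^{(l)}$ inductively by combining the ``measurable tiling'' partitions $\mathcal{Q}^{(n)}$ from Proposition \ref{tilingchoice} with a further refinement that makes the intersection pattern between the atoms and the sets $I_k$ locally constant, then transport Alice's choices across each cell by the smooth holonomies, and define $\mu_x^{(l)}$ via the tree-like construction so that continuity on $\text{Int}(P_i^{(l)})$ and $\nu$-measurability are inherited. The paper phrases the refinement as an explicit auxiliary partition $\mathcal{S}^{(l)}$ cut out by the boundary loci $\partial(\pi(f^{-k}(\Pi(c))\cap\Pi(x_0)))$, with $\mathcal{P}^{(l)}=\mathcal{P}^{(l-1)}\vee\mathcal{Q}^{(n_{l+1})}\vee\mathcal{S}^{(l)}$; your ``constant combinatorial pattern'' cells are exactly the resulting pieces.

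One genuine point of difference worth noting: you explicitly flag the possibility of degeneracies (tangential contacts forcing infinitely many connected components or boundary loci of positive $\nu$-measure) and propose to dispose of them by perturbing $\delta$, $\tau$, and the transverse disk $W$ to achieve transversality. The paper handles the finiteness-of-components issue only by a brief aside (enlarging $\delta$ to $\delta'$ so that $I_k(x)\subset B^u(x,\delta'/2)$) and does not verify $\nu(\partial P_i^{(l)})=0$ beyond asserting it can be checked easily. Your perturbation idea is a sound way to justify those assertions rigorously, but you should note that the deformation must in particular preserve the hypotheses under which the winning strategy of Theorem \ref{winning} was constructed (the smallness conditions on $c$, $\tau$, and $\delta$, and the inequality \eqref{e:a}), which is unproblematic since these are open conditions, and that the perturbation is chosen once before the induction begins rather than stage by stage, because at each later $l$ new loci appear and one needs a uniform choice; a residual/Baire argument over the countable family of loci does the job. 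Aside from that bookkeeping, your outline matches the paper's proof.
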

\begin{proof}
Suppose that we have a family of local tilings $\mathcal{T}(x)$ on each unstable manifold $B^u(x,\delta/2)$ which is constructed in Proposition \ref{tilingchoice}. We will make use of the finite partitions $\mathcal{Q}^{(n)}$ constructed there. Another useful family of finite partitions $\mathcal{S}^{(l)}$ is constructed below.

Recall the proof in Theorem \ref{winning}. Let us suppose $jr < l+1 \leq (j+1)r$, i.e., we are at $(l+1)$th turn of play and $j$th step in the proof of Theorem \ref{winning}. At $j$th step, Alice only needs to avoid some of the $I_k$'s with $jr(a+b) \leq k < (j+1)r(a+b)$. Let $\pi: \Pi(x_0)\rightarrow \overline{B^W(x_0, \delta/2)}$ be the natural map such that $\pi(z)$ is the unique point in $B^u(z,\delta/2) \cap \overline{B^W(x_0, \delta/2)}$. Consider the boundaries of the projection to $\overline{B^W(x_0, \delta/2)}$ under $\pi$ of all these $k$th preimages of $\Pi(c)$, i.e.
$$\bigcup_{k < (j+1)r(a+b)} \partial \left(\pi \left(f^{-k}(\Pi(c)) \cap \Pi(x_0)\right)\right).$$
Let $\mathcal{S}^{(l)}$ be the partition of $\overline{B^W(x_0, \delta/2)}$ into connected sets such that
$$\bigcup_{S\in \mathcal{S}^{(l)}}\partial S=\bigcup_{k < (j+1)r(a+b)} \partial \left(\pi \left(f^{-k}(\Pi(c)) \cap \Pi(x_0)\right)\right)\bigcup \partial(B^W(x_0, \delta/2)).$$
Then by definition of $\mathcal{S}^{(l)}$, for each $S\in \mathcal{S}^{(l)}$ and the $k$'s we need to consider, the set $f^{-k}(\Pi(c))\bigcap \bigcup_{x\in S}B^u(x,\delta'/2)$ is the union of finitely many open and connected sets (in general the intersection of two connected sets may have infinitely many connected components. To exclude this extreme scenario, we enlarge $\delta$ a little bit to $\delta'$ such that $I_k(x) \subset B^u(x,\delta'/2)$. Then $f^{-k}(\Pi(c))\bigcap \bigcup_{x\in S}B^u(x,\delta'/2)$ should have finitely many connected components). Since the number of $k$'s is finite, the partition $\mathcal{S}^{(l)}$ is finite and $\mathcal{S}^{(l-1)} \leq \mathcal{S}^{(l)}$. Note that $\mathcal{S}^{(l)}$ can be trivial if $\pi \left(f^{-k}(\Pi(c))\cap \Pi(x_0)\right) \supset \overline{B^W(x_0, \delta/2)}$ for all $k < (j+1)r(a+b)$ that we need consider. Note also that $\mathcal{S}^{(l)}$ of different $jr < l+1 \leq (j+1)r$ are the same.

Now we are ready to prove the proposition by induction on $l$. At each turn we specify the choice for $\mathcal{A}_x^{(l)}$ according to the constructions in the proof of Proposition \ref{schmidtgame} and Theorem \ref{winning}.

Consider $l=0$. Set $\mathcal{P}^{(0)}=\mathcal{Q}^{(n_1)} \vee \mathcal{S}^{(0)}$. For each $\overline{P_i^{(0)}}\in \mathcal{P}^{(0)}$ and any $x\in \overline{P_i^{(0)}}$ we can let Bob choose $\psi_{x}(\omega_1) \subset D_0(x)$ in the way that $$f^n(\overline{\psi_{x}(\omega_1)})=h_{x_i^{n_1},x}(f^n(\overline{\psi_{x_i^{n_1}}(\omega_1)}).$$
by Proposition \ref{tilingchoice}. This implies that $\bigcup_{x\in \text{Int}(P_i^{(0)})}\psi_x(\omega_1)$ is open and connected by construction of $\mathcal{Q}^{(n_1)}$. Recall the proof in Theorem \ref{winning}. When $l=0$, Alice only needs to avoid some of the finitely many $I_k$'s with $0 \leq k < r(a+b)$ and recall that for each of such $k$ there exists only one $I_k$ to avoid. By definition of $\mathcal{S}^{(0)}$, $\bigcup_{x\in \text{Int}(P_i^{(0)})}\left(I_x^{(k)}\cap \psi_x(\omega_{1})\right)$ is open and connected for each of such $k$. By avoiding the above open and connected sets, Alice can choose atoms $\psi_x(\omega'_{1})$ such that $\bigcup_{x\in \text{Int}(P_i^{(0)})}\psi_x(\omega'_{1})$ is open and connected. Alice can do so because of the smoothness of the holonomy map in (3) of Proposition \ref{tilingchoice}. We proved (3) when $l=0$. (4) is immediate since $\mu_x^{(0)}$ is the volume measure on $\psi_x(\omega'_1)$ according to the proof of Lemma \ref{treelike} (see \cite{Mc}, \cite{U}, \cite{Wu}).

Suppose the conclusion is true for $0, \cdots, l-1$, and now we prove it for $l$. Set $\mathcal{P}^{(l)}=\mathcal{P}^{(l-1)} \vee \mathcal{Q}^{(n_{l+1)}}\vee \mathcal{S}^{(l)}$. We proved (1) and (2). The remaining proof of (3) and (4) is essentially just a combination of induction hypothesis and a repetition of the argument for $l=0$. Since $\mathcal{A}_x^{(l)}$ will be constructed in the $(l+1)$th turn of play, let us suppose $jr < l+1 \leq (j+1)r$, i.e., we are at the $j$th step in the proof of Theorem \ref{winning}. At $j$th step, Alice only needs to avoid some of the $I_k$'s with $jr < l+1 \leq (j+1)r$. Consider $P_i^{(l)} \in \mathcal{P}^{(l)}$. Since $\mathcal{P}^{(l-1)} \leq \mathcal{P}^{(l)}$, by the induction hypothesis $\bigcup_{x\in \text{Int}(P_i^{(l)})} \mathbf{A}_x^{(l-1)}$ is a finite union of open and connected sets. So inside each of Alice's balls in the $l$th turn, we can let Bob choose $N$ atoms $\psi_x(\omega_{l+1})$ on the $(l+1)$th turn such that each $\bigcup_{x \in \text{Int}(P_i^{(l)})}\psi_x(\omega_{l+1})$ is open and connected by construction of $\mathcal{Q}^{(n_{l+1})}$. Then by the definition of $\mathcal{S}^{(l)}$, $\bigcup_{x\in \text{Int}(P_i^{(l)})}\left(I_x^{(k)}\cap \psi_x(\omega_{l+1})\right)$ is open and connected for each of such $I_k$ that Alice needs to avoid. By avoiding the above open and connected sets, Alice can choose balls $\psi_x(\omega'_{l+1})$ such that $\bigcup_{x\in \text{Int}(P_i^{(l)})}\psi_x(\omega'_{l+1})$ is open and connected.  Alice can do so because of the smoothness of the holonomy map in (3) of Proposition \ref{tilingchoice}. We proved (3). (4) is immediate from (3) and the fact that $\mu_x^{(l)}$ is obtained by a rescaling of the volume measure in each of the $l$ steps according to the proof of Lemma \ref{treelike} (see \cite{Mc}, \cite{U}, \cite{Wu}).
\end{proof}

\begin{proposition}\label{measurable}
Fix an arbitrary small $\epsilon >0$. For each $x\in \overline{B^W(x_0, \delta/2)}$, there exists a measure $\mu_x$ supported on $B^u(x, \delta/2) \cap E_x(f,y)$ such that
\begin{enumerate}
\item For any $z\in B^u(x, \delta/2)$, $\mu_x (B^u(z, r)) \leq Cr^{u-\epsilon}$,
\item $x\mapsto \mu_x$ is measurable with respect to the volume measure $\nu$ on $\overline{B^W(x_0, \delta/2)}.$
  \end{enumerate}
\end{proposition}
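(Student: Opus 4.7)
The plan is to define $\mu_x$ as the weak-$*$ limit of the sequence $\{\mu_x^{(l)}\}_{l=0}^{\infty}$ constructed in Proposition \ref{measurablechoice}, and then to verify the two required properties separately. Property (1), the pointwise upper bound on $\mu_x(B^u(z,r))$, is inherited directly from Proposition \ref{familymeasure}, since the measures $\mu_x^{(l)}$ of Proposition \ref{measurablechoice} are exactly the measures produced by the abstract construction of Lemma \ref{treelike} applied to the tree-like family $\mathcal{A}_x=\{\mathcal{A}_x^{(l)}\}$. In particular, $\mu_x$ is well defined, supported on $B^u(x,\delta/2)\cap E_x(f,y)$ (since $\mathbf{A}_x^{(\infty)}\subset E_x(f,y)$ by the winning strategy of Theorem \ref{winning}), and satisfies $\mu_x(B^u(z,r))\leq Cr^{u-\epsilon}$ for all $z$ and all small $r$.

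The substantive step is property (2). I will verify that for every continuous function $\varphi$ on a suitable neighborhood of $\Pi(x_0)$, the map
\begin{equation*}
F_\varphi: x\mapsto \int \varphi\, d\mu_x
\end{equation*}
is $\nu$-measurable on $\overline{B^W(x_0,\delta/2)}$. Fix $l\in\mathbb{N}_0$ and a piece $P_i^{(l)}\in\mathcal{P}^{(l)}$. By item (3) of Proposition \ref{measurablechoice}, $\bigcup_{x\in\text{Int}(P_i^{(l)})}\mathbf{A}_x^{(l)}$ is a finite union of open connected sets whose fibers $\mathbf{A}_x^{(l)}$ vary continuously in $x$; moreover, $\mu_x^{(l)}$ is obtained by a fixed rescaling of the volume measure on each fiber atom (see item (4) there and the proof of Lemma \ref{treelike}). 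Hence $x\mapsto \int \varphi\, d\mu_x^{(l)}$ is continuous on each $\text{Int}(P_i^{(l)})$. Since by item (1) of Proposition \ref{measurablechoice} the boundaries $\partial P_i^{(l)}$ are $\nu$-null, the map $x\mapsto\int \varphi\,d\mu_x^{(l)}$ is $\nu$-measurable on all of $\overline{B^W(x_0,\delta/2)}$.

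Passing to the limit, by item (2) of Proposition \ref{familymeasure} we have $\int \varphi\,d\mu_x=\lim_{l\to\infty}\int \varphi\,d\mu_x^{(l)}$ for every $x$, so $F_\varphi$ is the pointwise limit of $\nu$-measurable functions and hence $\nu$-measurable. Since $\Pi(x_0)$ (equipped with the induced topology) is a compact metric space, $C(\Pi(x_0))$ is separable; choosing a countable dense subset $\{\varphi_j\}\subset C(\Pi(x_0))$, $\nu$-measurability of all $F_{\varphi_j}$ implies Borel measurability of $x\mapsto\mu_x$ as a map into the space of finite Borel measures with the weak-$*$ topology, which is exactly the sense of measurability used in Section 4.

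The only delicate point I anticipate is the propagation of the continuity of $x\mapsto \mu_x^{(l)}$ through the tree-like construction: one must be sure that, because the tiling atoms and the preimages $I_k$ to be avoided vary continuously over each piece $\text{Int}(P_i^{(l)})$ by Propositions \ref{tilingchoice} and \ref{measurablechoice}, the re-normalization factors entering the definition of $\mu_x^{(l)}$ are also continuous in $x$ on that piece. This is the main obstacle, but it is built into the inductive construction of $\mathcal{P}^{(l)}$ in Proposition \ref{measurablechoice}, and the verification is a routine check using the smoothness of the holonomy maps $h_{x_i^n,x}$ and bounded distortion (Lemma \ref{BD}).
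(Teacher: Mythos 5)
Your proposal is correct and follows essentially the same route as the paper: define $\mu_x$ as the weak-$*$ limit of the $\mu_x^{(l)}$ from Proposition \ref{measurablechoice}, inherit the local dimension bound from Proposition \ref{familymeasure}, and pass measurability of $x\mapsto\mu_x^{(l)}$ to the limit. The paper states this final step in one sentence; you spell out the standard mechanism (piecewise continuity on each $\mathrm{Int}(P_i^{(l)})$, $\nu$-nullity of the boundaries, pointwise convergence of $\int\varphi\,d\mu_x^{(l)}$, separability of $C(\Pi(x_0))$), which is a justified elaboration rather than a different argument.
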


\begin{proof}
Let $\mu_x^{(l)}$ be as in Proposition \ref{measurablechoice}. By Proposition \ref{familymeasure}, we can take $\mu_x$ as the unique weak limit of $\mu_x^{(l)}$. (1) is immediate from (3) of Proposition \ref{familymeasure}. Since $x \mapsto\mu_x^{(l)}$ is measurable by (4) of Proposition \ref{measurablechoice}, $x\mapsto\mu_x$ is measurable and we get (2).
\end{proof}

\subsection{Measure $\mu$}
Finally we can define the measure $\mu$ supported on $\Pi(x_0)\cap E(f,y)$.
\begin{definition} \label{measure}
For any Borel set $A \subset M$, define
\begin{equation*}
\mu(A):= \int_{\overline{B^W(x_0, \delta/2)}}\int_{B^u(x, \delta/2)} \chi_A(z)d\mu_x(z) d\nu(x).
\end{equation*}
where $\chi_A$ is the characteristic function of $A$, $\nu$ is the volume measure on $\overline{B^W(x_0, \delta/2)}$, and $\mu_x$ is as in Proposition \ref{measurable}. $\mu$ is well defined since $x \mapsto \mu_x$ is measurable with respect to $\nu$.
\end{definition}

Fix $0 <\tau <1$. For any $z \in \Pi(x_0)$, define
\begin{equation*}
B_l(z):=B(z, \tau^l).
\end{equation*}
and
\begin{equation*}
C_l(z):=\bigcup_{u \in B^W (z, \tau^l)}B^u(u, \tau^l).
\end{equation*}
From the definition of $\mu$, it is easier to estimate $\mu$-measure of sets $C_l(z)$ than balls $B_l(z)$. Nevertheless the following lemma allows us to estimate $\mu$-measure of balls. The same lemma has appeared in \cite{Wu} (Lemma 5.10), so the proof is omitted here.
\begin{lemma}
There exists a $l_0 >0$ such that for any $l \geq l_0$ and any $z\in \Pi(x_0)$ we have
\begin{equation*}
 C_{l}(z) \subset B_{l-l_0}(z) \text{\ \ and\ \ } B_{l}(z) \subset C_{l-l_0}(z).
\end{equation*}
\end{lemma}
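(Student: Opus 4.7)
The plan is to deduce both containments from two standard facts: (i) the intrinsic distances $d^W$ on $W$ and $d^u$ on the unstable leaves dominate the ambient Riemannian distance $d$ on $M$, and (ii) a local product structure near $x_0$: because $W$ is smooth and transverse to the continuous foliation $W^u$, and $\Pi(x_0)$ is relatively compact, the angle between $TW$ and $E^u$ is bounded away from $0$, so one has uniform comparability $d(z,y)\asymp d^W(z,\pi(y)) + d^u(\pi(y),y)$ where $\pi$ denotes the $W^u$-holonomy onto $W$ (suitably translated so $z\in W$).

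\smallskip
For the inclusion $C_l(z)\subset B_{l-l_0}(z)$, I would just chase definitions. If $y\in C_l(z)$ pick $u\in B^W(z,\tau^l)$ with $y\in B^u(u,\tau^l)$. Then $d(z,u)\le d^W(z,u)<\tau^l$ and $d(u,y)\le d^u(u,y)<\tau^l$, hence by the triangle inequality $d(z,y)<2\tau^l$. Choosing $l_0$ with $\tau^{-l_0}\ge 2$ gives $d(z,y)<\tau^{l-l_0}$, which is the desired containment. This step uses only that the induced Riemannian distance on a submanifold is $\ge$ the ambient distance, and is independent of $z\in\Pi(x_0)$.

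\smallskip
For the inclusion $B_l(z)\subset C_{l-l_0}(z)$, I would invoke the local product structure. Slightly more precisely, for each $z\in\Pi(x_0)$ consider the smooth transversal through $z$ parallel to $W$ (for example $\exp_z(E^\perp(z))$, which is the leaf of the smooth foliation used to build $\mathcal{T}$). Any $y$ with $d(z,y)$ sufficiently small lies in a neighborhood where the local $W^u$-leaf through $y$ meets this transversal in a unique point $u$; then $d^W(z,u) \le K\,d(z,y)$ and $d^u(u,y)\le K\,d(z,y)$ for a constant $K$ depending only on a uniform lower bound for the angle between $TW$ and $E^u$ over $\overline{\Pi(x_0)}$ and on a bound for the $C^0$-modulus of the unstable foliation. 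Hence $y\in B^u(u,K\tau^l)\subset B^u(u,\tau^{l-l_0})$ and $u\in B^W(z,K\tau^l)\subset B^W(z,\tau^{l-l_0})$, provided $l_0$ is chosen so that $\tau^{-l_0}\ge K$.

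\smallskip
Combining both requirements, I would set $l_0=\lceil\log(\max(2,K))/\log(1/\tau)\rceil$, and then check that the conclusions hold for all $l\ge l_0$ and all $z\in\Pi(x_0)$, with $K$ and the product-structure neighborhood size being uniform over the compact rectangle $\overline{\Pi(x_0)}$. The only delicate point, and the one I expect to be the \emph{main obstacle}, is the second containment: verifying that the product-structure comparison $d^W(z,u)+d^u(u,y)\le K\,d(z,y)$ is uniform in $z\in\Pi(x_0)$. This requires using that $W$ is a smooth local transversal, that $E^u$ is continuous, and that $\overline{\Pi(x_0)}$ is compact, so that both the transversality angle and the continuity modulus of $W^u$ are uniformly controlled; once this is in hand, the rest is a triangle-inequality/rescaling argument as above (and indeed, as the paper notes, this is exactly Lemma~5.10 of \cite{Wu}).
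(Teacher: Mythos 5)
Your argument is correct and is exactly the standard argument one expects behind this lemma; note that the paper itself omits the proof and simply cites Lemma~5.10 of \cite{Wu}, so there is nothing in the present paper to compare against. The first containment is the elementary triangle inequality plus the fact that intrinsic distances on submanifolds dominate the ambient distance, and the second uses uniform local product structure (i.e.\ uniformly bounded angle between $E^u$ and the smooth transversal over the compact $\overline{\Pi(x_0)}$, together with uniform continuity of $E^u$) to get the constant $K$; choosing $l_0$ with $\tau^{-l_0}\ge\max(2,K)$ is the right rescaling. You have correctly flagged the only delicate point, namely the uniformity of $K$ over $\overline{\Pi(x_0)}$, which in the partially hyperbolic setting follows from compactness and continuity of the unstable distribution, so I see no gap.
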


Now we are able to estimate $\mu$-measure of balls in the following lemma. The proof is exactly same as the one of Lemma 5.11 in \cite{Wu} and we omitted here.
\begin{lemma}\label{frostman2}
For any $r>0$ small enough and any $z\in \Pi(x_0)$, one has
\begin{equation*}
\mu(B(z,r)) \leq D r^{n-\epsilon}
\end{equation*}
for some constant $0<D<\infty$.
\end{lemma}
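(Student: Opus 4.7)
The plan is to reduce the problem to estimating $\mu$ on the cylinders $C_l(z)$, which fit naturally with the product-type definition of $\mu$, and then transfer back to balls via the preceding lemma. Given $r>0$, choose $l$ with $\tau^{l+1} \leq r < \tau^l$ so that $B(z,r) \subset B_l(z) \subset C_{l-l_0}(z)$. It thus suffices to prove $\mu(C_l(z)) \leq D' \tau^{l(n-\epsilon)}$ with a uniform constant $D'$, and then absorb the harmless factor $\tau^{-l_0(n-\epsilon)}$ into $D$.

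By the definition in \ref{measure} and Fubini,
\begin{equation*}
\mu(C_l(z)) = \int_{\overline{B^W(x_0,\delta/2)}} \mu_x\bigl(C_l(z) \cap B^u(x,\delta/2)\bigr)\, d\nu(x).
\end{equation*}
Because $\mu_x$ is supported on the single unstable leaf $B^u(x,\delta/2)$ and each point of $\Pi(x_0)$ lies on exactly one such leaf (as $W$ is transverse to $W^u$), the integrand vanishes unless $B^u(x,\delta/2)$ meets $C_l(z)$. Using the smoothness of the transverse foliation $W$ and the definition $C_l(z) = \bigcup_{u \in B^W(z,\tau^l)} B^u(u,\tau^l)$, this occurs only for $x$ in a neighborhood of $z$ in $W$ of $\nu$-measure at most $C_1\tau^{l(n-u)}$ (since $\dim W = n-u$ and $W$ is smooth). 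For such $x$, the set $C_l(z) \cap B^u(x,\delta/2)$ is contained in an unstable ball $B^u(w_x, C_2\tau^l)$ for some $w_x$, where the constant $C_2$ depends only on the bounded geometry of the smooth transverse foliation. Applying Proposition \ref{measurable}(1) with $r = C_2 \tau^l$ yields
\begin{equation*}
\mu_x\bigl(C_l(z) \cap B^u(x,\delta/2)\bigr) \leq C\,(C_2 \tau^l)^{u-\epsilon}.
\end{equation*}

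Combining the two estimates,
\begin{equation*}
\mu(C_l(z)) \leq C_1 \tau^{l(n-u)} \cdot C\,C_2^{u-\epsilon}\, \tau^{l(u-\epsilon)} = D'\,\tau^{l(n-\epsilon)},
\end{equation*}
and translating back gives $\mu(B(z,r)) \leq \mu(C_{l-l_0}(z)) \leq D'\tau^{-l_0(n-\epsilon)} \tau^{l(n-\epsilon)} \leq D r^{n-\epsilon}$, as desired. The main technical point to verify carefully is the inclusion $C_l(z) \cap B^u(x,\delta/2) \subset B^u(w_x, C_2\tau^l)$ with a constant $C_2$ independent of $x$, $z$, and $l$; this is where the smoothness of $W$ (rather than merely absolute continuity of $W^s$) is essential, and is exactly why the argument of \cite[Lemma~5.11]{Wu} transplants verbatim. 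Once this uniform comparison is in hand, everything else is a direct application of Fubini and the already established upper bound on $\mu_x$ from Proposition \ref{measurable}.
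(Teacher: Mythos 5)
Your proposal is correct and follows exactly the route the paper invokes by reference to \cite[Lemma~5.11]{Wu}: pass from the ball $B(z,r)$ to a cylinder $C_{l-l_0}(z)$ via the preceding lemma, use the Fubini structure of $\mu$ from Definition~\ref{measure} to split the estimate into a transversal factor of size $\tau^{l(n-u)}$ and a fiber factor of size $\tau^{l(u-\epsilon)}$ via Proposition~\ref{measurable}(1), and multiply. The final bookkeeping with $\tau^{l+1}\le r<\tau^l$ is also handled correctly.

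Two small clarifications on the justifications offered, neither of which affects correctness. First, the set of $x\in\overline{B^W(x_0,\delta/2)}$ whose leaf $B^u(x,\delta/2)$ meets $C_l(z)$ is the image of $B^W(z,\tau^l)$ (a ball in the $W$-leaf through $z$) under unstable-foliation holonomy onto the transversal $\overline{B^W(x_0,\delta/2)}$; bounding its $\nu$-measure by $C_1\tau^{l(n-u)}$ uses the smoothness of $W$ for $\nu(B^W(z,\tau^l))\lesssim\tau^{l(n-u)}$ \emph{together with} the fact that $W^u$ is transversally absolutely continuous with bounded Jacobians (stated in the introduction, citing \cite{BP}, \cite{PS}, \cite{BW}); smoothness of $W$ alone does not control the holonomy image. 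Also, ``a neighborhood of $z$ in $W$'' should read ``a set near $\pi(z)$ in $\overline{B^W(x_0,\delta/2)}$,'' since $z$ itself need not lie on the transversal. Second, the inclusion $C_l(z)\cap B^u(x,\delta/2)\subset B^u(w_x,C_2\tau^l)$ is actually immediate with $C_2=1$: since distinct unstable leaves are disjoint, $C_l(z)\cap B^u(x,\delta/2)$ equals $B^u(u_x,\tau^l)\cap B^u(x,\delta/2)$ where $u_x$ is the unique point of $B^W(z,\tau^l)\cap W^u(x)$ (or is empty), so no appeal to the bounded geometry of $W$ is needed there.
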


\begin{proof}[Proof of Main Theorem \ref{main2}]
For any nonempty open subset $V \subset M$, we can find some $x_0 \in V$, a local smooth foliation $W$ transverse to $W^u$ near $x_0$, and $\delta>0$ small enough such that $\Pi(x_0, W, \delta) \subset V$. Main Theorem \ref{main2} follows immediately from Lemma \ref{frostman}, Lemma \ref{frostman2} and by letting $\epsilon \to 0$.
\end{proof}

Proof of Corollary \ref{countable} is a modification of the one of Corollary 2.4 in \cite{Wu}. For completeness, we provide the proof here:
\begin{proof}[Proof of Corollary \ref{countable}]
Let $Y=\{y_t\}_{t=1}^\infty$. Then $E_x(f, Y)=\cap_{t=1}^\infty E_x(f, y_t)$ is also a winning set for modified Schmidt games induced by $f$ on $W^u(x)$ with respect to $\mathcal{T}$. Let's recall a proof of this fact due to \cite{S} which we omitted in the proof of Proposition \ref{winproperty}. Let $a$ be as in \eqref{e:a}. We know that $E_x(f, y_t)$ is $a$-winning for all $t \in \mathbb{N}$, and we want to show that $E_x(f, Y)$ is $(a, b)$-winning for any $b>a_*$. Here is the strategy for Alice to win. At the first, third, fifth, \ldots turns, Alice uses an $(a, a+2b;E_x(f, y_1))$-winning strategy which forces $\cap_{j=1}^\infty \psi(\omega_{1+2(j-1)}) \subset E_x(f, y_1)$. At the second, sixth, tenth, \ldots turns, Alice uses an $(a, b+3(a+b);E_x(f, y_2))$-winning strategy which forces $\cap_{j=1}^\infty \psi(\omega_{2+2^2(j-1)}) \subset E_x(f, y_2)$. In general, at $k$th turn with $k\equiv 2^{t-1} (\text{mod}2^t)$, Alice uses an $(a, b+(2^t-1)(a+b);E_x(f, y_t))$-winning strategy which forces $\cap_{j=1}^\infty \psi(\omega_{2^{t-1}+2^t(j-1)}) \subset E_x(f, y_t)$. By this strategy, Alice can enforce that the unique point in the intersection of all atoms is in $E_x(f, Y)$.

Now we construct the measure $\mu$ supported on $\Pi(x_0)\cap E(f,Y)$ as in Definition \ref{measure}. We can construct $\mu_x^{(l)}$ and $\mu_x$ as in Proposition \ref{familymeasure} supported on $B^u(x, \delta/2) \cap E_x(f, Y)$ since $E_x(f, Y)$ is a winning set for modified Schmidt games. So it is enough to specify a choice such that $x\mapsto \mu_x$ is measurable. The idea is same as in the proof of Proposition \ref{measurablechoice}. The difference is that at $l$th step of the induction(i.e. $(l+1)$th turn of the game) with $l+1\equiv 2^{t-1} (\text{mod}2^t)$, Alice needs to avoid some $I_x^{(k)}(y_t)$'s, the preimages of $\Pi(y_t, c_t)$, where $\Pi(y_t, c_t)$ is the open rectangle neighborhood of $y_t$ as in the proof of Theorem \ref{winning} for the $(a, b+(2^t-1)(a+b);E_x(f, y_t))$-modified Schmidt games. If $l+1=2^{t-1}+2^t(j-1)$, then $k$ is bounded above by some finite number and thus there are only finitely many such $k$'s. So the argument in Proposition \ref{measurablechoice} still works, i.e., at $(l+1)$th turn, there exists a finite partition $\mathcal{P}^{(l)}$, and Alice can choose atoms such that $\bigcup_{x\in P_i^{(l)}} \mathbf{A}_x^{(l)}$ is a finite union of open and connected sets. Hence we have that $x\mapsto \mu_x$ is measurable as before.
\end{proof}
\begin{remark}
Let $\{f_i\}_{i=1}^N$ ($N\geq 2$) be any finite set of $C^{1+\theta}$-partially hyperbolic diffeomorphsims on $M$, and $y \in M$. Since our definition of modified Schmidt games depends on the partially hyperbolic diffeomorphism itself, it is nonsense to say whether $\cap_{i=1}^N E_x(f_i,y)$ is winning for modified Schmidt games. Thus it is not known whether $\cap_{i=1}^N E(f_i,y)$ has full Hausdorff dimension on $M$ when $N \geq 2$.
\end{remark}

\ \
\\[-2mm]
\textbf{Acknowledgement.} This is a subsequent work of my PhD thesis and I would like to thank my advisors Federico Rodriguez Hertz and Anatole Katok for their constant support.

\end{document}